\newcommand{\ph}{\varphi}
\newcommand{\R}{\mathbb{R}}
\newcommand{\C}{\mathbb{C}}
\newcommand{\Z}{\mathbb{Z}}
\newcommand{\N}{\mathbb{N}}
\newcommand{\Hom}{\text{Hom}}
\newcommand{\mf}{\mathfrak}
\newcommand{\mc}{\mathcal}
\newcommand{\bs}{\boldsymbol}
\renewcommand{\ker}{\text{ker}}
\newcommand{\coker}{\text{coker}}
\newcommand{\id}{\text{id}}
\renewcommand{\dim}{\text{dim}}
\newcommand{\inj}{\hookrightarrow}
\newcommand{\ones}{\text{ones}}
\newcommand{\sur}{\twoheadrightarrow}
\newcommand{\Res}{\text{Res}}
\newcommand{\Ind}{\text{Ind}}
\newcommand{\Irr}{\text{Irr}}
\newcommand{\Cay}{\text{Cay}}
\newcommand{\diag}{\text{diag}}
\newcommand{\ch}{\text{ch}}
\renewcommand{\hat}{\widehat}
\renewcommand{\bar}{\overline}
\renewcommand{\tilde}{\widetilde}
\theoremstyle{definition}
\newtheorem{theorem}{Theorem}[section]
\newtheorem{def-prop}[theorem]{Definition-Proposition}
\newtheorem{prop}[theorem]{Proposition}
\newtheorem{example}[theorem]{Example}
\newtheorem{definition}[theorem]{Definition}
\newtheorem{conjecture}[theorem]{Conjecture}
\newtheorem{lemma}[theorem]{Lemma}
\theoremstyle{remark}
\newtheorem*{remark}{Remark}
\def\blfootnote{\gdef\@thefnmark{}\@footnotetext}
\begin{document}
\title[Differential posets and restriction in critical groups]{Differential posets and restriction in critical groups}
\author{Ayush Agarwal}
\address{Stanford University, Stanford, CA.}
\email{ayush@stanford.edu} 
\author{Christian Gaetz}
\address{Department of Mathematics, Massachusetts Institute of Technology, Cambridge, MA.}
\email{gaetz@mit.edu}
\date{\today}

\begin{abstract}
In recent work, Benkart, Klivans, and Reiner defined the \textit{critical group} of a faithful representation of a finite group $G$, which is analogous to the critical group of a graph.  In this paper we study maps between critical groups induced by injective group homomorphisms and in particular the map induced by restriction of the representation to a subgroup.  We show that in the abelian group case the critical groups are isomorphic to the critical groups of a certain Cayley graph and that the restriction map corresponds to a graph covering map.  We also show that when $G$ is an element in a \textit{differential tower of groups}, as introduced by Miller and Reiner, critical groups of certain representations are closely related to words of up-down maps in the associated differential poset.  We use this to generalize an explicit formula for the critical group of the permutation representation of $\mf{S}_n$ given by the second author, and to enumerate the factors in such critical groups. 
\end{abstract}

\maketitle

\section{Introduction} \label{sec:intro}

\blfootnote{An extended abstract of this work appears in the proceedings of FPSAC 2018 \cite{abstract}.}

The critical group $K(\Gamma)$ is a well-studied abelian group invariant of a finite graph $\Gamma$ which encodes information about the dynamics of a process called \textit{chip firing} on the graph (see \cite{chip-firingsurvey} where critical groups are called \textit{sandpile groups}).  Recent work of Benkart, Klivans, and Reiner defined analogous abelian group invariants $K(V)$, also called \textit{critical groups}, associated to a faithful representation $V$ of a finite group $G$ \cite{BKR}.  It is known (see, for example, \cite{Tr}) that graph covering maps induce surjective maps between graph critical groups.  This paper investigates maps on critical groups of group representations which are induced by group homomorphisms.  

Differential posets, introduced by Stanley \cite{St}, generalize many of the combinatorial and enumerative properties of Young's lattice.  In \cite{MR}, Miller and Reiner introduced a very strong conjecture about the Smith normal form of $UD+tI$ where $U,D$ are the up and down maps in a differential poset, and $t$ is a variable.  We investigate how this conjecture, which was proven for powers of Young's lattice by Shah \cite{Sh}, can be used to determine the structure of critical groups in certain \textit{differential towers of groups.}

Section \ref{sec:defs} defines critical groups for graphs and group representations and gives background results.  It also discusses background on differential posets and differential towers of groups which will be used throughout the later sections.

In Section \ref{sec:maps} we study maps between critical groups which are induced by group homomorphisms.  In particular, restriction of representations to a subgroup $H \subset G$ induces a map $\bar{\Res}:K(V) \to K(\Res^G_H V)$.  When $G$ is abelian, Theorem \ref{thm:cay-graph} shows that $K(V)$ can be identified with the critical group of a certain Cayley graph $\Cay(\hat{G},\mc{S}_V)$, and that the restriction map $\bar{\Res}$ agrees with a map on graph critical groups induced by a natural graph covering.

In \cite{G1}, the second author determined the exact structure of the critical group for the permutation representation of the symmetric group $\mf{S}_n$.  This result depended on a relationship between tensor products with the permutation representation and the up and down maps in Young's lattice of integer partitions.  Section \ref{sec:gen-perm-rep} formalizes this connection and generalizes it to the context of differential towers of groups, allowing us to explicitly compute the critical group for a generalized permutation representation of the wreath product $A \wr \mf{S}_n$ in Theorem \ref{thm:gen-perm-rep}.  It also investigates properties of the critical groups associated to representations $V(w)$ which occur by repeatedly applying restriction and induction to the trivial representation in a differential tower of groups.  The pattern of restriction and induction is specified by a word $w \in \{U,D\}^*$, where $U,D$ are the up and down operators in the corresponding differential poset.  In Theorem \ref{thm:structure of K(V(f))} we show that the structure of the critical group $K(V(w))$ is closely related to combinatorial properties of the up and down operators, as studied in \cite{St}.    

Finally, in Section \ref{sec:factors}, Theorem \ref{thm:ones of w} gives an enumeration of the factors in the elementary divisor form of $K(V(w))$ in terms of the rank sizes of the corresponding differential tower of groups.  We conclude by presenting a conjecture for the size and multiplicity of the smallest nontrivial factor in $K(V(U^kD^k))$ in $A \wr \mf{S}_n$.  This conjecture also relates the larger factors in this critical group to the factors in a critical group for the subgroup $A \wr \mf{S}_{n-k}$.

\section{Background and definitions} \label{sec:defs}
\subsection{Critical groups of graphs}
This section gives some background on critical groups of graphs; see \cite{chip-firingsurvey} for a thorough survey.  We will be interested in critical groups of graphs primarily as motivation for our study of critical groups of group representations, however Section \ref{subsec:cayley} below gives a close relationship between the two concepts when $G$ is abelian. 

Let $\Gamma$ be a finite directed graph with a unique sink $s$, we sometimes designate a vertex as a sink and therefore ignore its outgoing edges.  Fix some ordering $s=v_0,v_1,...,v_{\ell}$ of the vertices of $\Gamma$, and let $d_i$ be the out-degree of $v_i$ and $a_{ij}$ be the number of edges from $v_i$ to $v_j$.  Then the \textit{Laplacian matrix} $\tilde{L}(\Gamma)$ has entries
\[
\tilde{L}(\Gamma)_{ij}=\begin{cases} d_i-a_{ii} & \text{for $i=j$,} \\ -a_{ij} & \text{for $i \neq j$.} \end{cases}
\]
The \textit{reduced Laplacian matrix} $L(\Gamma)$ is the $\ell \times \ell$ matrix obtained from $\tilde{L}(\Gamma)$ by removing the row and column corresponding to the sink.

The \textit{critical group} $K(\Gamma)$ (also called the \textit{sandpile group} in the literature), defined as 
\[
K(\Gamma) = \coker(L(\Gamma): \Z^{\ell} \to \Z^{\ell}),
\]
is a finite abelian group whose order is the number of spanning trees of $\Gamma$ which are directed towards the sink.  There are two sets of distinguished coset representatives for $K(\Gamma)$, the \textit{superstable} and \textit{recurrent} configurations, which encode the dynamics of a process called \textit{chip-firing} on $\Gamma$.  In the remainder of the paper we will primarily be interested in the group structure of $K(\Gamma)$, and not in these particular coset representatives or the chip-firing process.

Given two directed multigraphs $\Gamma, \Gamma'$, a \textit{graph map} is a continuous map $\ph: \Gamma \to \Gamma'$ of the underlying topological spaces which maps the interior of each edge homeomorphically to the interior of another edge, preserving orientation; by continuity this also defines a map from the vertices of $\Gamma$ to the vertices of $\Gamma'$.  A graph map is a \textit{graph covering} if in addition each vertex of $\Gamma$ has a neighborhood on which the restriction of $\ph$ is a homeomorphism.

The following proposition is well-known, see for example \cite{Tr}:

\begin{prop} \label{prop:graph covering surj}
The underlying map on vertices of a graph covering $\ph: \Gamma \to \Gamma'$ induces a surjective group homomorphism $\bar{\ph}: K(\Gamma) \sur K(\Gamma')$.
\end{prop}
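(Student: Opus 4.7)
The plan is to realize $\bar\ph$ as the map induced on cokernels by the $\Z$-linear extension of the underlying vertex map of $\ph$, and then derive surjectivity from that of the vertex map itself. Concretely, define $\pi: \Z V(\Gamma) \to \Z V(\Gamma')$ by $e_v \mapsto e_{\ph(v)}$; the key computation is the intertwining relation $\pi \circ \tilde L(\Gamma) = \tilde L(\Gamma') \circ \pi$ between the unreduced Laplacians. Applied to a basis vector $e_v$, the two sides read $d_v\, e_{\ph(v)} - \sum_{w} a_{v,w}\, e_{\ph(w)}$ and $d_{\ph(v)}\, e_{\ph(v)} - \sum_{w'} a_{\ph(v),w'}\, e_{w'}$ respectively. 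The covering hypothesis provides, at each vertex, an orientation-preserving bijection between incident edges of $\Gamma$ and incident edges of $\Gamma'$; in particular the out-edges of $v$ correspond to those of $\ph(v)$, so $d_v = d_{\ph(v)}$ and the pushforward by $\ph$ of the out-neighbor multiset of $v$ equals that of $\ph(v)$. The two expressions then match termwise.

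Assuming the sink $s$ of $\Gamma$ is the unique preimage of the sink $s'$ of $\Gamma'$ (the natural convention in this setup), $\pi$ descends to a surjection $\bar\pi: \Z V(\Gamma)/\Z s \sur \Z V(\Gamma')/\Z s'$ of the reduced lattices, still intertwining the reduced Laplacians $L(\Gamma)$ and $L(\Gamma')$. Passing to cokernels yields a well-defined group homomorphism $\bar\ph: K(\Gamma) \to K(\Gamma')$. Surjectivity is then immediate: the composite $\Z V(\Gamma)/\Z s \xrightarrow{\bar\pi} \Z V(\Gamma')/\Z s' \sur K(\Gamma')$ is surjective and factors through $K(\Gamma)$.

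The only substantive step is the intertwining relation, and it follows essentially tautologically from the covering condition---the local homeomorphism at each vertex is precisely what identifies the relevant rows of $\tilde L(\Gamma)$ and $\tilde L(\Gamma')$. I anticipate no real obstacle here; the principal care required is bookkeeping around the sink convention so that the descent from $\tilde L$ to $L$ (and hence to $K$) proceeds cleanly.
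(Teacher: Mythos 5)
Neither the paper nor its cited source is reproduced here --- the proposition is quoted as well-known from \cite{Tr} --- but your overall strategy (push forward along the vertex map, intertwine Laplacians, pass to cokernels) is the standard one, and your verification of the intertwining relation for the honest unreduced Laplacians is correct: a covering restricts to a bijection on the star of each vertex, so out-degrees match and the out-neighbour (and in-neighbour) multisets push forward correctly.

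The gap is in the sink bookkeeping. The assumption that $s$ is the \emph{unique} preimage of $s'$ is not a convention you are free to impose: for a covering of a connected graph every fibre has the same cardinality, namely the degree of the covering, so uniqueness forces the covering to be an isomorphism. In the paper's own motivating example (Figure \ref{fig:Cayley-graph}) the sink of the base Cayley graph has three preimages. Moreover the assumption is doing real work in your argument: without it the \emph{reduced} Laplacians do not intertwine. Take $v \neq s$ with $\ph(v) = s'$; then $L(\Gamma')\,\bar{\pi}(e_v) = L(\Gamma')(0) = 0$, whereas $\bar{\pi}(L(\Gamma)e_v)$ equals the image of the $s'$-column of $\tilde{L}(\Gamma')$ in $\Z V(\Gamma')/\Z e_{s'}$, which is nonzero whenever $s'$ has an incoming edge. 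What survives --- and all you actually need --- is the containment $\bar{\pi}(\im L(\Gamma)) \subseteq \im L(\Gamma')$: the offending element is the reduction of the $s'$-column of $\tilde{L}(\Gamma')$, and it lies in $\im L(\Gamma')$ because the columns of the full Laplacian sum to zero, so that column is minus the sum of the remaining ones. Equivalently, present $K(\Gamma) \cong \Z V(\Gamma)/(\Z e_s + \im \tilde{L}(\Gamma))$ and push forward by the surjection $\pi$, which only requires $\ph(s)=s'$; surjectivity of $\bar{\ph}$ is then immediate as you say. One further caution: if you zero out the out-edges of the designated sinks before forming $\tilde{L}$, as the paper's definition permits, the unreduced intertwining itself fails at the non-sink preimages of $s'$ (their rows are not zero while the $s'$-row is), so the key computation must be carried out with the full Laplacians and the sink convention absorbed afterwards, noting that it only alters data inside $\Z e_s$.
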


Section \ref{subsec:cayley} discusses the relationship between maps induced on critical groups of Cayley graphs by certain graph coverings $\bar{\ph}$ and the map $\bar{\Res}$ on critical groups of group representations.

\subsection{Critical groups of group representations}
Let $G$ be a finite group and $V$ a faithful complex (not-necessarily-irreducible) representation of $G$; let $\mathbbm{1}_G=V_0, V_1,...,V_{\ell}$ denote the irreducible complex representations and $\chi_i$, $i=0,...,\ell$ denote their characters.  Let $R(G)$ denote the \textit{representation ring} of $G$.  This is the commutative $\Z$-algebra of formal integer combinations of representations of $G$ modulo the relations $[W \oplus W']=[W] + [W']$; the product structure is defined as $[W] \cdot [W'] = [W \otimes_{\C} W']$.  As a $\Z$-module, $R(G)$ is isomorphic to $\Z^{\ell+1}$, since the classes of irreducible representations $[\mathbbm{1}_G],[V_1],...,[V_{\ell}]$ form a basis.  We define elements 
\[
\delta^{(g)}=\sum_{W \in \Irr(G)} \chi_W(g)\cdot [W]
\]
of $R(G)$ corresponding to the columns in the character table of $G$.

The representation ring $R(G)$ is endowed with a $\Z$-algebra homomorphism $\dim: R(G) \to \Z$ sending representations $[W]$ to their dimensions as vector spaces (which we also denote by $\dim(W)$), and extending by linearity to virtual representations.  The kernel of this map, which we denote by $R_0(G)$, is the ideal of elements in $R(G)$ with virtual dimension 0.  

Then multiplication by the element $\dim(V)[\mathbbm{1}_G]-[V]$ defines a linear map $\tilde{C}_V:R(G) \to R(G)$.  Since $\dim(V)[\mathbbm{1}_G]-[V] \in R_0(G)$, this descends to a linear map
\[
C_V: R_0(G) \to R_0(G).
\] 
\begin{def-prop}[\cite{BKR}, Proposition 5.12] \label{def-prop:critical group}
If $V$ is a faithful finite dimensional representation of $G$, then the linear map $C_V$ is nonsingular, and so $\coker(C_V)$ is a finite abelian group.  We define the \textit{critical group} $K(V)$ to be this cokernel.  We also have that $\coker(\tilde{C}_V) = \Z \cdot \delta^{(e)} \oplus K(V)$.
\end{def-prop}

\begin{remark}
As a quotient of $R_0(G)$, the critical group $K(V)$ inherits a multiplicative structure in addition to its (additive) abelian group structure.  We are interested here only in the additive structure of $K(V)$.  See Sections 5 and 6 of \cite{BKR} for some discussion of the multiplicative structure.
\end{remark}

We will need the following facts about critical groups and the maps $\tilde{C}_V$:

\begin{prop}[\cite{BKR}, Proposition 5.3] \label{prop:eigenvectors}
A full set of orthogonal eigenvectors for $\tilde{C}_V$ is given by the column vectors $\delta^{(g)}$ in the character table for $G$: 
\[
\tilde{C}_V\delta^{(g)}=(\dim(V)-\chi_V(g))\delta^{(g)},
\]
where $g$ ranges over a set of conjugacy class representatives for $G$, and $\chi_V$ denotes the character of $V$.
\end{prop}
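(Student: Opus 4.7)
The plan is to recognize Proposition \ref{prop:eigenvectors} as a consequence of the standard identification of the complexified representation ring with the ring of class functions on $G$, together with the second orthogonality relation for characters.

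First I would set up the identification $\Phi: R(G) \otimes_{\Z} \C \xrightarrow{\sim} \mc{C}(G)$, where $\mc{C}(G)$ is the commutative $\C$-algebra of class functions on $G$ with pointwise multiplication, defined on basis elements by $\Phi([W]) = \chi_W$. The fact that $\chi_{V \otimes W} = \chi_V \chi_W$ makes this a ring isomorphism. Under this identification, multiplication by $[V]$ in $R(G)$ corresponds to pointwise multiplication by the class function $\chi_V$, and the eigenvectors of pointwise multiplication are exactly (multiples of) the indicator functions $\mathbbm{1}_{[g]}$ of conjugacy classes, with eigenvalue $\chi_V(g)$.

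The key step is then to compute $\Phi(\delta^{(g)})$ explicitly. Evaluating at $h \in G$ and applying the second orthogonality relation
\[
\sum_{W \in \Irr(G)} \chi_W(g)\,\overline{\chi_W(h)} \;=\; |C_G(g)|\cdot \mathbbm{1}_{[g]=[h]}
\]
(combined with $\overline{\chi_W(h)} = \chi_W(h^{-1})$ to handle the conjugate) shows that $\Phi(\delta^{(g)})$ is a scalar multiple of the indicator function of a single conjugacy class. Consequently $[V] \cdot \delta^{(g)} = \chi_V(g)\,\delta^{(g)}$, and multiplying through by $\dim(V)[\mathbbm{1}_G] - [V]$ yields the claimed eigenvalue $\dim(V) - \chi_V(g)$ for $\tilde{C}_V$.

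Orthogonality of the $\delta^{(g)}$ as $g$ ranges over conjugacy class representatives is then immediate from a second application of the second orthogonality relation: the inner product $\langle \delta^{(g)}, \delta^{(h)} \rangle$ on $R(G) \otimes \C$ (using the basis of irreducibles) is exactly $\sum_W \chi_W(g)\overline{\chi_W(h)}$, which vanishes whenever $[g] \neq [h]$. The main subtlety is just the bookkeeping with complex conjugates and the inverse $g \mapsto g^{-1}$ in the second orthogonality relation; once the correspondence with class-function indicators is fixed, the rest is immediate.
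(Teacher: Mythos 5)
The paper itself gives no proof of this proposition---it is quoted from \cite{BKR}, Proposition 5.3---and your argument is essentially the standard one for that result: transport the multiplication operator to the ring of class functions, where it becomes pointwise multiplication by $\chi_V$, and use column orthogonality to identify $\delta^{(g)}$ with a multiple of a conjugacy-class indicator function; orthogonality of the $\delta^{(g)}$ is a second application of the same relation. The one piece of bookkeeping you flag but do not carry through actually changes the labels in your stated conclusion: $\Phi(\delta^{(g)})(h)=\sum_W \chi_W(g)\chi_W(h)=\sum_W \chi_W(g)\overline{\chi_W(h^{-1})}$ is supported on the class of $g^{-1}$, not $g$, so multiplication by $[V]$ scales $\delta^{(g)}$ by $\chi_V(g^{-1})=\overline{\chi_V(g)}$ rather than $\chi_V(g)$. (A quick check: for $G=\Z/3\Z$ with $V=V_1$ and $\omega$ a primitive cube root of unity, one computes $[V_1]\cdot\delta^{(g)}=\omega^{2}\,\delta^{(g)}$ while $\chi_{V_1}(g)=\omega$.) The displayed eigenvalue equation holds verbatim for the transposed (McKay-matrix) convention implicit in Example \ref{ex:perm rep of S4}, where rows record the decompositions of $V\otimes V_i$; and since $g\mapsto g^{-1}$ permutes conjugacy classes, the full set of orthogonal eigenvectors and the multiset of eigenvalues are as claimed under either convention, which is all the paper ever uses. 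So your proof is correct in substance, but to make the displayed identity literally true for the multiplication operator you should either write the eigenvalue as $\dim(V)-\overline{\chi_V(g)}$ or reindex the eigenvectors by $g\mapsto g^{-1}$.
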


\begin{remark} 
When $V$ is faithful, $\chi_V(g) \neq \dim(V)$ for $g \neq e$, thus Proposition \ref{prop:eigenvectors} shows that $\ker(\tilde{C}_V)$ is spanned by $\delta^{(e)}=\sum_{i=0}^{\ell} \dim(V_i)[V_i]=[V_{reg}]$, where $V_{reg}$ is the regular representation of $G$.
\end{remark}

\begin{theorem}[\cite{G1}, Theorem 3]\label{thm:repeated}
Let $V$ be a faithful representation of a finite group $G$, then:
\begin{itemize}
\item[a.] Let $e=c_0,...,c_{\ell}$ be a set of conjugacy class representatives for $G$.  The order of the critical group is given by:
\begin{equation}
|K(V)|=\frac{1}{|G|}\prod_{i=1}^{\ell}(\dim(V)-\chi_V(c_i)).
\end{equation}
\item[b.] If $V$ is self-dual and $a$ is an integer value of $\chi_V$ achieved on $m$ different conjugacy classes of $G$, then $K(V)$ contains a subgroup isomorphic to $(\Z/(\dim(V)-a)\Z)^{m-1}$. 
\end{itemize}
\end{theorem}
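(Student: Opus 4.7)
The plan is to derive both parts from the full eigenvalue/eigenvector description of $\tilde{C}_V$ in Proposition \ref{prop:eigenvectors}, using a matrix-tree-style pseudo-determinant identity for (a) and a lattice-theoretic analysis of the integer eigenspace at the shared character value $a$ for (b). For part (a), by Proposition \ref{prop:eigenvectors} the operator $\tilde{C}_V$ is diagonalizable on $R(G)\otimes\C$ with eigenvalues $\dim(V) - \chi_V(c_i)$ on eigenvectors $\delta^{(c_i)}$, and faithfulness of $V$ (as noted in the remark there) implies the only zero eigenvalue occurs at $c_0 = e$, with eigenvector the primitive integer vector $\delta^{(e)} = \sum_W \dim(W)[W] = [V_{reg}]$, whose squared norm in the basis of irreducibles is $\sum_W \dim(W)^2 = |G|$. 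Definition-Proposition \ref{def-prop:critical group} identifies $|K(V)|$ with the torsion of $\coker(\tilde{C}_V)$, and the standard pseudo-determinant/matrix-tree identity for an integer operator whose kernel is one-dimensional and spanned by a primitive vector of squared norm $N$ gives this order as $\frac{1}{N}\prod_{\lambda \neq 0}\lambda$, yielding the claimed formula.

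For part (b), suppose $\chi_V$ attains the integer value $a$ on exactly the conjugacy classes $c_{i_1}, \dots, c_{i_m}$. Then $\dim(V) - a$ is an eigenvalue of $\tilde{C}_V$ of geometric multiplicity $m$, with $\C$-independent eigenvectors $\delta^{(c_{i_1})}, \dots, \delta^{(c_{i_m})}$, each lying in $R_0(G) \otimes \C$ since its virtual dimension is $0$. Because $a \in \Z$, the $(\dim(V) - a)$-eigenspace $E \subseteq R_0(G) \otimes \Q$ is defined over $\Q$ of $\Q$-dimension $m$, so it contains a sublattice $L \subseteq R_0(G)$ of rank $m - 1$ — for instance, the $\Z$-span of the $m-1$ differences $v_j := \delta^{(c_{i_j})} - \delta^{(c_{i_1})}$, replaced if necessary by suitable Galois-stable integer combinations when the individual $\delta^{(c)}$ have irrational entries. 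Since $C_V$ acts on $E$ as multiplication by $\dim(V) - a$ and the other eigenvalues of $C_V$ are nonzero, I would show $L \cap \im(C_V) = (\dim(V) - a)L$, so the image of $L$ in $K(V) = \coker(C_V)$ is isomorphic to $L/(\dim(V) - a)L \cong (\Z/(\dim(V) - a)\Z)^{m-1}$.

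The hardest step in part (a) is pinning down the precise matrix-tree identity in this non-graph setting: unlike the classical case where the kernel is the all-ones vector and the normalizing factor equals the number of vertices, here the kernel is the non-uniform vector $[V_{reg}]$ and the normalizing factor is its squared norm $|G|$; justifying this rigorously requires comparing the eigenbasis $\{\delta^{(c)}\}$ to the integer basis $\{[V_i]\}$ via the character table. In part (b), the key technical point is the descent from the $\Q$-linear independence of the eigenvectors to $\Z$-linear independence of their images in $K(V)$, and in particular showing $L \cap \im(C_V) = (\dim(V) - a)L$ exactly — which uses that $E$ is a direct summand of $R_0(G)\otimes\Q$ and that $C_V$ restricted to a complement of $E$ is invertible over $\Q$.
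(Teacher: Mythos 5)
The paper does not prove this theorem; it imports it from \cite{G1}, so your argument has to stand on its own. For part (a) your adjugate/pseudo-determinant route is essentially the standard one, but the identity you invoke --- torsion order equals $\frac{1}{\|v\|^{2}}\prod_{\lambda\neq 0}\lambda$ for a primitive \emph{right}-kernel vector $v$ --- is simply false for non-symmetric integer matrices (try $\left(\begin{smallmatrix}0&1\\0&2\end{smallmatrix}\right)$, whose cokernel is torsion-free even though the nonzero eigenvalue is $2$), and $\tilde{C}_V$ is not symmetric in general: its transpose is $\tilde{C}_{V^{*}}$. The correct identity, obtained from $\mathrm{adj}(\tilde{C}_V)=c\,vu^{T}$ and $\mathrm{tr}(\mathrm{adj}(\tilde{C}_V))=\prod_{\lambda\neq 0}\lambda$, divides by $\langle u,v\rangle$ where $u$ is a primitive \emph{left}-kernel vector. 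So you must also check that the left kernel is spanned by the same vector $[V_{reg}]=(\dim V_i)_i$; it is, since $\sum_i \dim(V_i)\langle V\otimes V_i,V_j\rangle=\dim(V^{*}\otimes V_j)=\dim(V)\dim(V_j)$, and then $\langle u,v\rangle=\sum_i\dim(V_i)^2=|G|$ as you want. With that addition part (a) is fine.

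Part (b) has a genuine gap. The unproved step is $L\cap\im(C_V)=(\dim(V)-a)L$, and the justification you sketch (the eigenspace $E$ is a rational direct summand on which $C_V$ acts by $\dim(V)-a$, and $C_V$ is invertible on a complement) applies verbatim to the full rank-$m$ lattice $E\cap R_0(G)$, so it would prove $(\Z/(\dim(V)-a)\Z)^{m}\subseteq K(V)$. That is false: for the permutation representation of $\mf{S}_4$ (Example \ref{ex:perm rep of S4}) the value $a=0$ is taken on $m=2$ classes and $\dim(V)-a=4$, yet $K(V)\cong\Z/4\Z$. The source of the error is which cokernel you are computing. Your lattice argument controls $L\cap\tilde{C}_V(R_0(G))$, but the critical group --- the torsion of $\coker(\tilde{C}_V)$, which is the group your part (a) computes and the only reading of Definition-Proposition \ref{def-prop:critical group} consistent with the $\frac{1}{|G|}$ in the order formula --- is $R_0(G)/\tilde{C}_V(R(G))$, and $\tilde{C}_V(R(G))$ contains $\tilde{C}_V(R_0(G))$ with index $|G|$. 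The $E$-components of the extra elements $\tilde{C}_V(y)$, $y\notin R_0(G)$, form a lattice strictly denser than $(\dim(V)-a)(E\cap R_0(G))$ in general, and accounting for them is exactly what cuts the exponent from $m$ to $m-1$. A correct route: write $\tilde{C}_V=(aI-A)+(\dim(V)-a)I$ with $A$ the tensor (McKay) matrix; since $aI-A$ has rank $\ell+1-m$ over $\Q$, its Smith normal form has exactly $m$ zeros, so $\coker\bigl((aI-A)\bmod (\dim(V)-a)\bigr)$ contains $(\Z/(\dim(V)-a)\Z)^{m}$ as a direct summand; as $\tilde{C}_V\equiv aI-A$ modulo $\dim(V)-a$, the same holds for $\coker(\tilde{C}_V)\otimes\Z/(\dim(V)-a)\Z\cong \Z/(\dim(V)-a)\Z\oplus K(V)\otimes\Z/(\dim(V)-a)\Z$; cancelling one cyclic factor and using that the invariant factors of $K(V)$ are totally ordered under divisibility yields $(\Z/(\dim(V)-a)\Z)^{m-1}\subseteq K(V)$.
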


\begin{example} \label{ex:perm rep of S4}
Let $G=\mf{S}_4$ and let $V=\C^4$ be the 4-dimensional representation where $G$ acts by permuting coordinates.  Working in the basis $\{[V_0],...,[V_4]\}$ of $R(G)$ given by the character table below, we decompose each tensor product $V \otimes V_i$ into irreducibles, giving the rows of the matrix $\tilde{C}_V$.  
\begin{center}
\begin{tabular}{|c|c|c|c|c|c|}
\hline
& $e$ & $(12)$ & $(123)$ & $(1234)$ & $(12)(34)$ \\ \hline \hline
$\chi_V$ & 4 & 2 & 1 & 0 & 0 \\ \hline
$\chi_0$ & 1 & 1& 1 & 1 & 1\\ \hline
$\chi_1$ & 3 & 1 & 0 & -1 & -1 \\ \hline
$\chi_2$ & 2 & 0 & -1 & 0 & 2 \\ \hline
$\chi_3$ & 3 & -1 & 0 & 1 & -1 \\ \hline
$\chi_4$ & 1 & -1 & 1 & -1 & 1 \\ \hline
\end{tabular}
\end{center}

\[
\tilde{C}_V = \begin{pmatrix}
3 & -1 & 0 & 0 & 0 \\ -1 & 2 & -1 & -1 & 0 \\ 0 & -1 & 3 & -1 & 0 \\ 0 & -1 & -1 & 2 & -1 \\ 0 & 0 & 0 &-1 & 3
\end{pmatrix}\]

To calculate the cokernel of $\tilde{C}_V: R(G) \to R(G)$, we compute the Smith normal form (see Section \ref{sec:snf} below) of $\tilde{C}_V$ to get $\diag(0,1,1,1,4)$.  This shows that $\coker(\tilde{C}_V)\cong \Z \oplus \Z/4\Z$, and so $K(V) \cong \Z/4 \Z$.

Alternatively, we could apply Theorem \ref{thm:repeated}(a) to see that 
\[
|K(V)|=\frac{1}{4!}(4-2)(4-1)(4-0)(4-0)=4
\]
and apply part (b) to see that $K(V)$ has a subgroup isomorphic to $\Z/4\Z$.  This forces $K(V) \cong \Z/4\Z$.

The critical groups for the permutation representation of $\mf{S}_n$ were computed by the second author in \cite{G1}.  In Section \ref{sec:gen-perm-rep} we generalize this result further. 
\end{example}

\subsection{Differential posets}
Differential posets are a class of partially ordered sets defined by Stanley in \cite{St}.  Differential posets retain many of the striking enumerative and combinatorial properties of Young's lattice $Y$, the lattice of integer partitions ordered by containment of Young diagrams.

\begin{figure}[ht] 
\begin{center}
\begin{tikzpicture}[shorten >=1pt, auto, node distance=3cm, ultra thick,
   node_style/.style={fill=white!20!,font=\sffamily\Large\bfseries},
   edge_style/.style={draw=black, thick}]

    \node[node_style, scale=1] (v0) at (5,-1) {$\emptyset$};
    \node[node_style, scale=0.7] (v1) at (5,0) {\yng(1)};
    \node[node_style, scale=0.7] (v2) at (4,1) {\yng(2)};
    \node[node_style, scale=0.7] (v3) at (6,1){\yng(1,1)};
    \node[node_style, scale=0.7] (v4) at (3,2){\yng(3)};
    \node[node_style, scale=0.7] (v5) at (5,2){\yng(2,1)};
    \node[node_style, scale=0.7] (v6) at (7,2){\yng(1,1,1)};
    \node[node_style, scale=0.7] (v7) at (2,3){\yng(4)};
    \node[node_style, scale=0.7] (v8) at (3.65,3){\yng(3,1)};
    \node[node_style, scale=0.7] (v9) at (5,3.15){\yng(2,2)};
    \node[node_style, scale=0.7] (v10) at (6.15,3.1){\yng(2,1,1)};
    \node[node_style, scale=0.6] (v11) at (7.8,3.1){\yng(1,1,1,1)};
    \node[node_style, scale=0.6] (v12) at (0.5,4.40){\yng(5)};
     \node[node_style, scale=0.6] (v13) at (2.15,4.40){\yng(4,1)};
     \node[node_style, scale=0.6] (v14) at (3.5,4.40){\yng(3,2)};
	\node[node_style, scale=0.6] (v15) at (4.94,4.40){\yng(3,1,1)};
    \node[node_style, scale=0.6] (v16) at (6.22, 4.45){\yng(2,2,1)};
    \node[node_style, scale=0.6] (v17) at (7.35,4.6){\yng(2,1,1,1)};
    \node[node_style, scale=0.6] (v18) at (8.8,4.6){\yng(1,1,1,1,1)};
    
    \draw[edge_style]  (v0) edge node{} (v1);
    \draw[edge_style]  (v1) edge node{} (v2);
    \draw[edge_style]  (v1) edge node{} (v3);
    \draw[edge_style]  (v2) edge node{} (v4);
    \draw[edge_style]  (v2) edge node{} (v5);
    \draw[edge_style]  (v3) edge node{} (v5);
    \draw[edge_style]  (v3) edge node{} (v6);
    \draw[edge_style]  (v4) edge node{} (v7);
    \draw[edge_style]  (v4) edge node{} (v8);
    \draw[edge_style]  (v5) edge node{} (v8);
    \draw[edge_style]  (v5) edge node{} (v9);
    \draw[edge_style]  (v5) edge node{} (v10);
    \draw[edge_style]  (v6) edge node{} (v10);
    \draw[edge_style]  (v6) edge node{} (v11);
 	\draw[edge_style]  (v7) edge node{} (v12);
    \draw[edge_style]  (v7) edge node{} (v13);
    \draw[edge_style]  (v8) edge node{} (v13);
    \draw[edge_style]  (v8) edge node{} (v14);
    \draw[edge_style]  (v8) edge node{} (v15);
    \draw[edge_style]  (v9) edge node{} (v14);
    \draw[edge_style]  (v9) edge node{} (v16);
    \draw[edge_style]  (v10) edge node{} (v15);
    \draw[edge_style]  (v10) edge node{} (v16);
    \draw[edge_style]  (v10) edge node{} (v17);
    \draw[edge_style]  (v11) edge node{} (v17);
    \draw[edge_style]  (v11) edge node{} (v18);
\end{tikzpicture}
\end{center}
\caption{Young's lattice $Y$, a 1-differential poset.}
\label{fig:youngs lattice}
\end{figure}

We refer the reader to \cite{EC1} for basic definitions related to posets in what follows.

\begin{definition}[\cite{St}, Definition 1.1 and Theorem 2.2]
For $r \in \Z_{>0}$, a poset $P$ is called an \textit{r-differential poset} if the following properties hold:
\begin{itemize}
\item[(DP1)] $P$ is a graded locally-finite poset with $\hat{0}$. 
\item[(DP2)] Let $\Z^{P_n}$ be the free abelian group spanned by elements of the $n$-th rank of $P$.  Define the \textit{up and down maps} $U_n:\Z^{P_n} \to \Z^{P_{n+1}}$ and $D_n:\Z^{P_n}\to \Z^{P_{n-1}}$ by 
\begin{align*}
&U_n x := \sum_{x \lessdot y} y, &D_n y := \sum_{x \lessdot y} x,
\end{align*}
where $x \lessdot y$ means that $y$ covers $x$.  Then we require that for all $n$ we have 
\begin{align*}
D_{n+1}U_n - U_{n-1}D_n = rI.
\end{align*}
\end{itemize}
\end{definition}
\noindent When the context is clear we omit the subscripts from the  up and down maps.

When $P$ is a differential poset, we let $p_n=|P_n|$ denote the size of the $n$-th rank, and we let $\Delta p_n = p_n-p_{n-1}$ denote the difference in the sizes of consecutive ranks.  We make the convention that $p_i=0$ for $i<0$.  In the case of Young's lattice, $p_n=p(n)$ where $p(n)$ denotes the number of integer partitions of $n$.  If $\lambda$ is a partition, we let $\lambda'$ denote the conjugate partition obtained by reflecting the Young diagram across the diagonal. The following results of Stanley characterize the eigenspaces of $UD$ in terms of the rank sizes.

\begin{theorem}[\cite{St}, Theorem 4.1] \label{thm:DP eigenvalues}
Let $P$ be an $r$-differential poset and let $n \in \N$.  Then $UD_n$ is semisimple and has characteristic polynomial:
\[
\ch(UD_n)=\prod_{i=0}^n (x-ri)^{\Delta p_{n-i}}.
\]
\end{theorem}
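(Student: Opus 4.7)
The plan is to induct on $n$, using the commutation relation $D_{n+1}U_n - U_{n-1}D_n = rI$ as the main tool. After extending scalars from $\Z$ to $\C$, I would first establish a direct-sum decomposition
\[
\C^{P_n} = U_{n-1}(\C^{P_{n-1}}) \oplus \ker D_n,
\]
and then transport an eigenbasis of $UD_{n-1}$ up via $U_{n-1}$, with each eigenvalue shifted by $r$, leaving the remaining $\Delta p_n$ dimensions of $\ker D_n$ as the $0$-eigenspace.

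For the splitting, put the Hermitian inner product on $\C^{P_n}$ making $P_n$ orthonormal, so that $U$ and $D$ become mutual adjoints. The computation
\[
\|U_{n-1}v\|^2 = \langle D_nU_{n-1}v, v\rangle = \langle (U_{n-2}D_{n-1} + rI)v, v\rangle = \|D_{n-1}v\|^2 + r\|v\|^2
\]
gives injectivity of $U_{n-1}$ (since $r \geq 1$) and shows $D_nU_{n-1}$ is positive definite, hence invertible on $\C^{P_{n-1}}$. Consequently $D_n$ is surjective onto $\C^{P_{n-1}}$, the intersection $U_{n-1}(\C^{P_{n-1}}) \cap \ker D_n$ is zero (since $D_nU_{n-1}v = 0$ forces $v = 0$ by the same identity), and a dimension count gives $\dim \ker D_n = p_n - p_{n-1} = \Delta p_n$, yielding the splitting. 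Next, rewriting $UD \cdot U = U(DU) = U(UD + rI)$ on $\C^{P_{n-1}}$ shows that if $v$ is a $\lambda$-eigenvector of $UD_{n-1}$, then $U_{n-1}v$ is a $(\lambda+r)$-eigenvector of $UD_n$, while every vector in $\ker D_n$ is automatically a $0$-eigenvector of $UD_n$. Combined with the splitting, this produces a full eigenbasis, so $UD_n$ is semisimple.

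For the characteristic polynomial, induct on $n$: the base case $n=0$ reads $\ch(UD_0) = x = (x-0)^{\Delta p_0}$ since $D_0 = 0$ and $p_0 = 1$. For the inductive step, the hypothesis gives $UD_{n-1}$ eigenvalues $ri$ with multiplicities $\Delta p_{n-1-i}$ for $i = 0, \ldots, n-1$; transporting via $U_{n-1}$ and reindexing $j = i+1$ yields eigenvalues $rj$ on $U_{n-1}(\C^{P_{n-1}})$ with multiplicities $\Delta p_{n-j}$ for $j = 1, \ldots, n$, while $\ker D_n$ contributes the $j = 0$ factor with multiplicity $\Delta p_n$, producing the claimed product. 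The main obstacle is really just establishing the splitting without a circular reference to the spectrum; once the adjointness identity $\|Uv\|^2 = \|Dv\|^2 + r\|v\|^2$ is exploited to force the direct sum, the remainder of the argument is bookkeeping on eigenvalues.
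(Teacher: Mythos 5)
Your proof is correct. The paper does not prove this statement---it is imported verbatim from Stanley's \emph{Differential posets} (Theorem 4.1)---and your argument (adjointness of $U$ and $D$ in the inner product making $P_n$ orthonormal, the identity $\|Uv\|^2=\|Dv\|^2+r\|v\|^2$ forcing $\C^{P_n}=U(\C^{P_{n-1}})\oplus\ker D_n$, and transporting eigenvectors up by $U$ with eigenvalue shift $r$) is essentially Stanley's original proof, and as a bonus it also establishes the eigenspace description quoted as Theorem \ref{thm:DP eigenspaces}.
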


\begin{theorem}[\cite{St}, Proposition 4.6] \label{thm:DP eigenspaces}
Let $E_n(ri)$ denote the eigenspace of $UD_n$ belonging to the eigenvalue $ri$, then
\[
E_n(0)=\ker(D_n)=(UP_{n-1})^{\perp}
\]
and 
\[
E_n(ri)=U^iE_{n-i}(0)
\]
for $1 \leq i \leq n$.
\end{theorem}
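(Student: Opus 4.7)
The plan is to work over $\Q$ (or any field of characteristic zero), equipping each $\Q^{P_n}$ with the inner product that makes the elements of $P_n$ an orthonormal basis. The crucial preliminary observation is that with respect to this inner product, $U_{n-1}$ and $D_n$ are adjoint to each other, since $\langle U x, y \rangle$ and $\langle x, D y\rangle$ both count covering pairs $x \lessdot y$.

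First I would settle the eigenvalue $0$ case. For any $x \in \Q^{P_n}$,
\[
\langle U D_n x, x\rangle = \langle D_n x, D_n x\rangle = \|D_n x\|^2,
\]
so $U D_n x = 0$ forces $D_n x = 0$, giving $E_n(0) = \ker(D_n)$. The identity $\ker(D_n) = (U P_{n-1})^\perp$ is then immediate from the adjunction: $x \in \ker(D_n)$ iff $\langle x, U y\rangle = \langle D_n x, y\rangle = 0$ for every $y \in \Q^{P_{n-1}}$.

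Next I would establish the key commutation identity $D_{n+1} U^i_{\,\cdot} - U^i_{\,\cdot} D_{n-i+1} = r i\, U^{i-1}_{\,\cdot}$ (with the appropriate subscripts understood) by induction on $i \geq 1$. The base case $i = 1$ is exactly (DP2), and the inductive step multiplies the previous identity by $U$ on the right and uses (DP2) once more. Given this, for any $y \in E_{n-i}(0) = \ker D_{n-i}$,
\[
U D_n (U^i y) = U\bigl(U^i D_{n-i} y + r i\, U^{i-1} y\bigr) = r i\, U^i y,
\]
so $U^i E_{n-i}(0) \subseteq E_n(ri)$.

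It remains to upgrade this inclusion to equality, and this is the step I expect to require the most care. I would do it by a dimension count. By Theorem \ref{thm:DP eigenvalues}, $\dim E_n(ri) = \Delta p_{n-i}$ and $\dim E_{n-i}(0) = \Delta p_{n-i}$ as well (taking the $i=0$ case of the same formula applied at rank $n-i$). So it suffices to show that $U^i$ is injective on $E_{n-i}(0)$. In fact $U$ is injective on every rank: because $D_{m+1} U_m = U_{m-1} D_m + rI$, induction together with the previous theorem shows that $DU$ on rank $m$ has spectrum lying in $\{r, 2r, \dots\}$ (no zero eigenvalue), hence $U_m$ itself is injective. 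Iterating gives injectivity of $U^i$, so $\dim U^i E_{n-i}(0) = \Delta p_{n-i} = \dim E_n(ri)$, and the inclusion is an equality. This completes both assertions of the theorem.
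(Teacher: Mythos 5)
This statement is quoted from Stanley (\cite{St}, Proposition 4.6) and the paper gives no proof of its own, so there is nothing internal to compare against. Your argument is correct and is essentially Stanley's: the adjointness of $U$ and $D$ gives $E_n(0)=\ker(D_n)=(UP_{n-1})^\perp$, the commutator identity $DU^i=U^iD+riU^{i-1}$ gives the inclusion $U^iE_{n-i}(0)\subseteq E_n(ri)$, and the dimension count via Theorem \ref{thm:DP eigenvalues} (which is proved independently, so no circularity) together with the injectivity of $U$ upgrades it to equality.
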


\subsection{Differential towers of groups}
\begin{definition}[\cite{MR}, Definition 6.1]
For $r \in \Z_{>0}$ define an \textit{r-differential tower of groups} $\mf{G}$ to be an infinite tower of finite groups
\[
\mf{G}: \{e\}=G_0 \subseteq G_1 \subseteq G_2 \subseteq \cdots 
\]
such that for all $n$:
\begin{itemize}
\item[(DTG1)] The branching rules for restricting irreducibles from $G_n$ to $G_{n-1}$ are \\ multiplicity-free, and 
\item[(DTG2)] $\Res^{G_{n+1}}_{G_n} \Ind_{G_n}^{G_{n+1}}- \Ind_{G_{n-1}}^{G_n}\Res_{G_{n-1}}^{G_n} = r \cdot \id$
where both sides are regarded as linear operators on $R(G_n)$.
\end{itemize}
\end{definition}

An $r$-differential tower of groups $\mf{G}$ corresponds to an $r$-differential poset $P=P(\mf{G})$ whose $n$-th rank $P_n$ is in bijection with the set $\Irr(G_n)$ of irreducible representations of $G_n$.  We will use Greek letters like $\lambda$ to denote elements of $P(\mf{G})$ and $V_{\lambda}$ to denote the corresponding irreducible representation.  We write $|\lambda|=n$ if $\lambda \in P_n$, or equivalently if $V_{\lambda}$ is a representation of $G_n$.  For $\lambda \in P_n$ and $\mu \in P_{n+1}$, $\lambda \lessdot \mu$ in $P$ if and only if $\Res^{G_{n+1}}_{G_n} V_{\mu}$ contains $V_{\lambda}$ in its irreducible decomposition, thus condition (DTG2) becomes condition (DP2). 

\begin{example}
Let $Y$ denote Young's lattice of integer partitions (see Figure \ref{fig:youngs lattice} above).  It is well known that irreducible representations of the symmetric group $\mf{S}_n$ are indexed by partitions $\lambda=(\lambda_1, \lambda_2,...)$ with $|\lambda|=\sum_i \lambda_i=n$; we refer the reader to \cite{James} for background on the representation theory of the symmetric group.  Young's rule says that $\Res^{\mf{S}_n}_{\mf{S}_{n-1}} V_{\lambda}$ decomposes as a direct sum of $V_{\nu}$ where $\nu$ ranges over all possible ways to remove a single box from the Young diagram for $\lambda$.  It is well known \cite{St}  that $Y$ is a 1-differential poset, so (DP2) holds, and by the above identification (DTG2) also holds.  Thus
\[
\mf{S}: \{e\} \subset \mf{S}_1 \subset \mf{S}_2 \subset \cdots
\] 
is a 1-differential tower of groups, with $P(\mf{S})=Y$.

More generally, if $A$ is an abelian group of size $r$, then Okada \cite{O} showed that the tower of wreath products $A \wr \mf{S}: \{e\} \subset A \subset A \wr \mf{S}_2 \subset A \wr \mf{S}_3 \subset \cdots$ is an $r$-differential tower of groups with $P(A \wr \mf{S})=Y^r$.
\end{example}

In the following result we show that the groups in any differential tower of groups have the same order as those in $A \wr \mf{S}$.

\begin{prop} \label{prop: size of DTG}
Let $\mf{G}: \{e\}=G_0 \subset G_1 \subset \cdots$ be an $r$-differential tower of groups, then $|G_n|=r^n \cdot n!$ for all $n \geq 0$.
\end{prop}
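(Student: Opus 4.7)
The plan is to use the standard identity $|G_n| = \sum_{\lambda \in \Irr(G_n)} (\dim V_\lambda)^2$, combined with two ingredients: an interpretation of $\dim V_\lambda$ as a count of saturated chains in $P(\mf{G})$, and the classical sum-of-squares identity $\sum_{\lambda \in P_n}(f^\lambda)^2 = r^n n!$ in an $r$-differential poset, which is itself a consequence of the commutation relation $DU-UD=rI$.

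First, I would show by induction on $n$ that for $\lambda \in P_n$ the dimension $\dim V_\lambda$ equals the number $f^\lambda$ of saturated chains $\hat{0} = \lambda^{(0)} \lessdot \lambda^{(1)} \lessdot \cdots \lessdot \lambda^{(n)} = \lambda$ in $P(\mf{G})$. The base case $n = 0$ is trivial since $G_0 = \{e\}$ has a unique 1-dimensional representation. For the inductive step, condition (DTG1) says that restriction is multiplicity-free, and by the identification of $P(\mf{G})$ with the branching graph we have $\Res^{G_n}_{G_{n-1}} V_\lambda = \bigoplus_{\mu \lessdot \lambda} V_\mu$. Taking dimensions gives $\dim V_\lambda = \sum_{\mu \lessdot \lambda} \dim V_\mu$, which by the inductive hypothesis equals $\sum_{\mu \lessdot \lambda} f^\mu = f^\lambda$.

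Next, I would establish that $\sum_{\lambda \in P_n}(f^\lambda)^2 = r^n n!$ in any $r$-differential poset. Equip $\Z^{P_n}$ with the inner product making the elements of $P_n$ orthonormal; then $U$ and $D$ are adjoint. Since $U^n \hat{0} = \sum_{\lambda \in P_n} f^\lambda \, \lambda$, we have
\[
\sum_{\lambda \in P_n}(f^\lambda)^2 = \langle U^n \hat{0}, U^n \hat{0}\rangle = \langle \hat{0}, D^n U^n \hat{0}\rangle.
\]
A short induction using the commutation relation $DU^k - U^k D = krU^{k-1}$ (which follows immediately from (DP2)) together with $D\hat{0} = 0$ yields $D^n U^n \hat{0} = n! \, r^n \, \hat{0}$; in particular the inner product above equals $r^n n!$.

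Combining these two steps gives
\[
|G_n| = \sum_{\lambda \in \Irr(G_n)} (\dim V_\lambda)^2 = \sum_{\lambda \in P_n}(f^\lambda)^2 = r^n \cdot n!,
\]
as desired. I do not expect any real obstacle here: both ingredients are essentially formal consequences of (DTG1), (DTG2), and the poset-theoretic translation already described in the excerpt; the only care required is to note that the inner product used in the proof of the sum-of-squares identity is the natural one in which $U$ and $D$ are adjoint, which matches the way the up and down maps are defined on the free abelian groups $\Z^{P_n}$.
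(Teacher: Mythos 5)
Your proof is correct and follows essentially the same route as the paper: identify $\dim V_\lambda$ with the number of saturated chains from $\hat{0}$ to $\lambda$ via multiplicity-free branching, then combine the sum-of-squares identity $\sum_{\lambda \in P_n} (f^\lambda)^2 = r^n n!$ with $|G_n| = \sum_\lambda (\dim V_\lambda)^2$. The only difference is that the paper simply cites Stanley's Corollary 3.9 for the sum-of-squares identity, whereas you supply a short self-contained proof via the adjointness of $U$ and $D$ and the relation $DU^k - U^kD = krU^{k-1}$, which is a valid (and standard) derivation.
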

\begin{proof}
Let $P=P(\mf{G})$ be the corresponding $r$-differential poset.  Since restriction of representations does not change dimension, we have 
\[
\dim(V_{\lambda})=\sum_{\mu \lessdot_P \lambda} \dim(V_{\mu}).
\]
It follows by induction that $\dim(V_{\lambda})=e(\lambda)$, where $e(\lambda)$ denotes the number of upward paths from $\hat{0}$ to $\lambda$ in the Hasse diagram of $P$.  Stanley showed in Corollary 3.9 of \cite{St} that for any $r$-differential poset $Q$ we have $\sum_{x \in Q_n} e(x)^2 = r^n \cdot n!$.  Applying this to $P$, and recalling that $\sum_{V \in \Irr(G)}\dim(V)^2=|G|$ for any finite group $G$ gives the desired result. 
\end{proof}

\begin{remark}
Recent work \cite{G2} of the second author shows that the tower $\mf{S}$ of symmetric groups is the only 1-differential tower of groups.
\end{remark}

\subsection{Smith normal form and cokernels of linear maps} \label{sec:snf}

The cokernel of a linear map over a PID is described by the Smith normal form of the corresponding matrix.  In this section we review basic facts about Smith normal form, and state a conjecture about the Smith normal form of the map $UD$ in a differential poset.

\begin{definition}
Let $M \in R^{n \times n}$ be an $n \times n$ matrix with entries in some ring $R$.  We say that $S$ is a \textit{Smith normal form} for $M$ if:
\begin{itemize}
\item $S=PMQ$ for some $P,Q \in GL_n(R)$, and 
\item $S$ is a diagonal matrix $S=\diag(s_1,...,s_n)$ such that successive diagonal entries divide one another: $s_i | s_{i+1}$ for all $i=1,...,n-1$.
\end{itemize}
\end{definition}

The following facts are well-known (see, for example \cite{StSNF}).

\begin{prop} \text{}
\begin{itemize}
\item[a.] If $M$ has a Smith normal form, then it is unique up to multiplication of the $s_i$ by units in $R$.
\item[b.] If $R$ is a PID, then all matrices $M$ have a Smith normal form.
\item[c.] If $M$ has a Smith normal form $S=\diag(s_1,...,s_n)$, then 
\[
\coker(M) \cong \bigoplus_{i=1}^n R/(s_i).
\]
\end{itemize}
\end{prop}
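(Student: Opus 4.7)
The plan is to prove the three parts in the order (c), (a), (b), because (c) is a short consequence of invertibility of $P$ and $Q$, (a) follows from an invariant-theoretic description of the $s_i$, and (b) is the main substantive content.

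For part (c), I would start from the defining relation $S=PMQ$ with $P,Q \in GL_n(R)$. The map $Q \colon R^n \to R^n$ is an $R$-module isomorphism, so $MQ$ and $M$ have the same image. Similarly $P \colon R^n \to R^n$ is an isomorphism carrying $\im(MQ)$ onto $\im(PMQ)=\im(S)$, and inducing an isomorphism $\coker(M)\to \coker(S)$. Since $S=\diag(s_1,\dots,s_n)$, the submodule $\im(S)\subset R^n$ is $\bigoplus_i (s_i)$, and the projection onto coordinates gives the direct sum decomposition $\coker(S)\cong \bigoplus_{i=1}^n R/(s_i)$.

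For part (a), the plan is to identify the $s_i$ intrinsically with the \emph{determinantal divisors} of $M$. For each $k$, let $d_k(M)$ be a generator of the ideal of $R$ generated by all $k\times k$ minors of $M$ (well-defined up to units in a PID). A standard Cauchy--Binet style computation shows that $d_k(AM)$ and $d_k(MB)$ agree with $d_k(M)$ up to units whenever $A,B$ are invertible, since every $k\times k$ minor of $AM$ is an $R$-linear combination of $k\times k$ minors of $M$, and vice versa using $A^{-1}$. Hence $d_k(S)=d_k(M)$ up to units. For a diagonal matrix $S=\diag(s_1,\dots,s_n)$ with $s_i \mid s_{i+1}$, one checks directly that $d_k(S)=s_1 s_2\cdots s_k$ up to units, so $s_k$ is determined up to units by the ratio $d_k(M)/d_{k-1}(M)$.

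For part (b), the plan is a reduction algorithm using row and column operations, which correspond to multiplication by elementary matrices in $GL_n(R)$. The main obstacle, and the reason a PID hypothesis is needed, is that over a general ring one cannot always reduce a pair of entries to their gcd. Over a PID one uses the following substitute: for any $a,b\in R$ with $\gcd(a,b)=d=xa+yb$, writing $a=da'$, $b=db'$, the matrix $\begin{pmatrix} x & y \\ -b' & a'\end{pmatrix}$ lies in $GL_2(R)$ and sends $(a,b)^T$ to $(d,0)^T$. This lets me iterate the following step: pick a nonzero entry and move it to position $(1,1)$ by a permutation; use the $GL_2$ trick above (applied to rows) to replace the $(1,1)$ entry by the gcd of the first column and zero out the rest of the first column; symmetrically clear the first row; if some entry outside the first row/column is not divisible by the $(1,1)$ entry, add that row to the first row and repeat. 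Since each non-terminating step strictly decreases the length of a prime factorization of the $(1,1)$ entry, the process terminates with all of the first row and column zeroed out except for a single generator $s_1$ that divides every remaining entry. Induct on the lower-right $(n-1)\times(n-1)$ block; the divisibility $s_1 \mid s_2$ is preserved since $s_1$ divides the whole remaining block.

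The main obstacle is part (b): carefully verifying termination of the reduction algorithm over a general PID (as opposed to a Euclidean domain, where one can use a norm), which is why the two-by-two gcd trick replaces the Euclidean division step.
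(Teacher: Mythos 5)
The paper does not actually prove this proposition; it is stated as well known and deferred to the cited reference \cite{St SNF}. Your proposal supplies the standard textbook proof, and it is essentially correct: part (c) via the induced isomorphism of cokernels under $P$ and $Q$, part (a) via invariance of the determinantal divisors, and part (b) via the $GL_2$ gcd trick with termination controlled by the length of a prime factorization of the $(1,1)$ entry (which is exactly what replaces the Euclidean norm argument in a general PID, since a PID is a UFD). One point worth tightening: part (a) is stated for a general ring $R$, and the paper genuinely needs it over $\Z[t]$, which it explicitly notes is \emph{not} a PID. Your phrase ``well-defined up to units in a PID'' therefore does not literally cover the case the paper uses. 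The fix is cosmetic: work with the ideal $I_k(M)$ generated by all $k\times k$ minors rather than with a gcd. This ideal is invariant under left and right multiplication by elements of $GL_n(R)$ over any commutative ring, and \emph{if} $M$ admits a Smith normal form $S=\diag(s_1,\dots,s_n)$ then $I_k(M)=I_k(S)=(s_1\cdots s_k)$ is automatically principal; comparing two Smith normal forms gives equal principal ideals, and in an integral domain such as $\Z[t]$ this forces the products $s_1\cdots s_k$, hence the individual $s_k$, to agree up to units (with a separate easy remark for the indices where $s_k=0$). With that adjustment your argument covers everything the paper relies on.
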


\begin{prop} \label{prop:SNF from minors}
Let $R$ be a PID, and suppose that the $n \times n$ matrix $M$ over $R$ has Smith normal form $\diag(s_1,...,s_n)$.  Then for $1 \leq k \leq n$ we have that $s_1s_2 \cdots s_k$ is equal to the greatest common divisor of all $k \times k$ minors of $M$, with the convention that if all $k \times k$ minors are 0, then their greatest common divisor is 0.   
\end{prop}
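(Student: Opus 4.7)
The plan is to show that the quantity $d_k(M) := \gcd\{k \times k \text{ minors of } M\}$ (well-defined up to units in $R$) is invariant under left and right multiplication by elements of $GL_n(R)$, and then to compute it directly on the Smith normal form.

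First I would establish the invariance lemma: for any $N \in R^{n \times n}$ and any $P \in GL_n(R)$, we have $d_k(PN) = d_k(N)$ up to units, and similarly $d_k(NP) = d_k(N)$. The key tool is the Cauchy–Binet formula, which expresses each $k \times k$ minor of $PN$ as an $R$-linear combination of $k \times k$ minors of $N$ (with coefficients that are $k \times k$ minors of $P$). This immediately gives $d_k(N) \mid d_k(PN)$. The reverse divisibility follows by applying the same observation to $P^{-1}(PN) = N$, using that $P^{-1} \in GL_n(R)$. The argument for right multiplication is symmetric (or one passes to transposes).

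Next I would apply this invariance to $S = PMQ$, where $S = \diag(s_1,\dots,s_n)$ is the Smith normal form, to conclude $d_k(M) = d_k(S)$ up to units. Finally, I would compute $d_k(S)$ directly: a $k \times k$ submatrix of a diagonal matrix has nonzero determinant only when it is selected from the same set of $k$ row and column indices $I \subseteq \{1,\dots,n\}$, in which case the determinant is $\prod_{i \in I} s_i$. Using the divisibility chain $s_1 \mid s_2 \mid \cdots \mid s_n$, any such product is divisible by $s_1 s_2 \cdots s_k$ (the product for the initial segment $I = \{1,\dots,k\}$), and $s_1 \cdots s_k$ itself appears as one of the minors, so $d_k(S) = s_1 s_2 \cdots s_k$.

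The conclusion then reads $s_1 s_2 \cdots s_k = d_k(S) = d_k(M)$, as desired, with the convention handling the case when all $k \times k$ minors vanish (equivalently, $s_k = 0$). The only real subtlety, rather than a genuine obstacle, is keeping track of the ``up to units'' ambiguity throughout, which is harmless since the statement concerns an ideal (the ideal generated by the $k \times k$ minors) rather than a specific element; phrasing the invariance lemma as an equality of ideals $(d_k(PNQ)) = (d_k(N))$ in $R$ sidesteps this bookkeeping cleanly.
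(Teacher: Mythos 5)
Your proof is correct and complete: the Cauchy--Binet argument showing that the ideal generated by the $k \times k$ minors is invariant under left and right multiplication by $GL_n(R)$, followed by the direct computation on $\diag(s_1,\dots,s_n)$ using the divisibility chain, is the standard proof of this fact. The paper itself states this proposition without proof, treating it as well known (it appears alongside other standard Smith normal form facts attributed to references such as \cite{St SNF}), so there is no alternative argument in the paper to compare against; your write-up supplies exactly the argument those references give, and your handling of the unit ambiguity via ideals and of the all-minors-zero case (equivalently $s_k=0$) is careful and correct.
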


We will primarily be interested in determining Smith normal forms over $\Z$, but we will use some results about Smith normal forms over $\Z[t]$ as a computational tool.  When $R=\Z$ we will always assume that the $s_i$ are nonnegative (this can be achieved since $\pm 1$ are the units in $\Z$).  When referring to an abelian group $A = \coker(M)$, we say that $A$ has $k$ \textit{factors} if exactly $k$ of the $s_i$ are different from 1; dually, we write $\ones(A)=k$ if exactly $k$ of the $s_i$ are equal to 1.    

In \cite{MR} Miller and Reiner make the following remarkable conjecture (note that $\Z[t]$ is not a PID):

\begin{conjecture}[\cite{MR}, Conjecture 1.1] \label{conj:Miller-Reiner}
For all differential posets $P$, and for all $n$, the map $U_{n-1}D_n+tI:\Z[t]^{p_n} \to \Z[t]^{p_n}$ has a Smith normal form over $\Z[t]$.
\end{conjecture}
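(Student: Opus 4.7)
The plan is to induct on $n$, using the defining relation (DP2) to tie the Smith normal form of $M_n(t) := U_{n-1}D_n + tI$ on rank $n$ back to the corresponding object on rank $n-1$. The key algebraic observation is the intertwining identity
\[
M_n(t)\, U_{n-1} \;=\; U_{n-1}\, M_{n-1}(t+r),
\]
which follows immediately from (DP2): substituting $D_n U_{n-1} = U_{n-2}D_{n-1} + rI_{P_{n-1}}$ inside $U_{n-1} D_n U_{n-1} + t U_{n-1}$ collapses the expression to $U_{n-1}(U_{n-2}D_{n-1} + (t+r)I_{P_{n-1}})$. In particular, $U_{n-1}\Z^{P_{n-1}}$ is $M_n(t)$-stable, and on this image $M_n(t)$ realizes $M_{n-1}(t+r)$ up to the change of basis provided by $U_{n-1}$.

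The strategy is then to pick a $\Z$-basis of $\Z^{P_n}$ adapted to a direct sum decomposition $\Z^{P_n} = U_{n-1}\Z^{P_{n-1}} \oplus K_n$, with the complementary sublattice $K_n$ of rank $\Delta p_n$ chosen to sit inside $\ker D_n$. Over $\Q$ such a splitting exists cleanly (indeed $\Q^{P_n} = U_{n-1}\Q^{P_{n-1}} \oplus \ker D_n$ by Theorem \ref{thm:DP eigenspaces}, together with the injectivity of $U_{n-1}$ coming from $DU = UD + rI$ being positive definite), and in any basis implementing such a decomposition $M_n(t)$ would be block diagonal,
\[
M_n(t) \;\sim\; \begin{pmatrix} M_{n-1}(t+r) & 0 \\ 0 & t\,I_{\Delta p_n} \end{pmatrix},
\]
since $M_n(t)$ acts as $tI$ on $\ker D_n$. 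By the inductive hypothesis $M_{n-1}(t+r)$ has a Smith normal form over $\Z[t]$, and by Theorem \ref{thm:DP eigenvalues} its invariant factors are products of linear forms $t + rj$, so one can combine the two blocks into a single SNF by standard divisibility bookkeeping: the factor $t$ contributed by the lower block is divisible by (and hence absorbable beneath) the invariant factors of the upper block.

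The crux, and the expected main obstacle, is that $U_{n-1}\Z^{P_{n-1}}$ need not be a direct summand of $\Z^{P_n}$ integrally, so the clean splitting above can fail. If it does, one is left with a block upper-triangular shape carrying a nontrivial off-diagonal entry, and to complete the argument one must show that its coefficients lie in the $\Z[t]$-ideals generated by the relevant invariant factors on the diagonal, so that it can be cleared by column operations over $\Z[t]$ without disturbing divisibility. Controlling the failure of this splitting amounts to understanding the Smith normal form of $U_{n-1}$ itself, a further open question for general differential posets. This is precisely the step where Shah's proof for $Y^r$ in \cite{Sh} exploits the specific combinatorics of Young's lattice (via RSK and Fomin-type growth models) to supply an integral basis, and it is the step for which no uniform mechanism valid across all differential posets is currently known.
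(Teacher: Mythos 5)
First, a point of order: this statement is the Miller--Reiner conjecture, recorded in the paper as Conjecture~\ref{conj:Miller-Reiner} precisely because it is open. The paper offers no proof of it for general differential posets; it only cites Shah's Theorem~\ref{thm: MR for Y} establishing the special case $P=Y^r$, which is all that is needed downstream. So there is no proof in the paper to compare against, and your own closing admission that ``no uniform mechanism valid across all differential posets is currently known'' correctly signals that what you have written is not a proof. The pieces that are correct are the intertwining identity $M_n(t)U_{n-1}=U_{n-1}M_{n-1}(t+r)$ and the rational decomposition $\Q^{P_n}=U_{n-1}\Q^{P_{n-1}}\oplus\ker D_n$ (with $U$ injective because $DU=UD+rI$ is positive definite); both follow from (DP2) and Theorem~\ref{thm:DP eigenspaces} as you say.

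Beyond the gap you acknowledge (the integral splitting), there is a second, unacknowledged gap that is arguably more serious: even if the splitting $\Z^{P_n}=U_{n-1}\Z^{P_{n-1}}\oplus K_n$ did hold integrally, the resulting block-diagonal matrix need not have a Smith normal form over $\Z[t]$, so the ``standard divisibility bookkeeping'' step fails. Your claim that $t$ is ``divisible by \dots the invariant factors of the upper block'' is false: $t$ does not divide $t+rj$ and $t+rj$ does not divide $t$, and $\Z[t]$ is not a PID, so one cannot interleave diagonal entries by taking gcd's. Concretely, $\diag(t,t+2)$ has \emph{no} Smith normal form over $\Z[t]$: its cokernel $\Z[t]/(t)\oplus\Z[t]/(t+2)$ is not a cyclic $\Z[t]$-module, whereas any $2\times2$ SNF with determinant $t(t+2)$ would have to be $\diag(1,t(t+2))$ with cyclic cokernel. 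The smallest genuine instance already illustrates that your reduction points the wrong way: for $P=Y$, $n=2$, one has
\[
U_1D_2+tI=\begin{pmatrix} t+1 & 1 \\ 1 & t+1\end{pmatrix}\sim\diag\bigl(1,\,t(t+2)\bigr),
\]
so the conjecture holds here, while your target block form would be $\diag(t+2,\,t)$, which is \emph{not} $\Z[t]$-equivalent to it (and has no SNF). Indeed $U_1\Z^{P_1}\oplus\ker D_2=\Z\{(1,1)\}\oplus\Z\{(1,-1)\}$ has index $2$ in $\Z^2$, and it is exactly this failure of integral splitting that glues $\Z[t]/(t)$ and $\Z[t]/(t+2)$ into the cyclic module $\Z[t]/(t^2+2t)$. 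In other words, the content of the conjecture lives in the arithmetic interaction between $U_{n-1}\Z^{P_{n-1}}$ and $\ker D_n$ inside $\Z^{P_n}$, not in the two blocks separately; a strategy whose ideal outcome is a block-diagonal form is aiming at a matrix that is generally inequivalent to $UD+tI$ and generally lacks an SNF.
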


We are interested in Smith normal forms over $\Z[t]$ because of the following very strong consequence:

\begin{prop}[\cite{MR}, Proposition 8.4] \label{prop:snf}
Let $M \in \Z^{n \times n}$ be semisimple and have integer eigenvalues, and suppose $M+tI$ has a Smith normal form over $\Z[t]$.  Then the Smith normal form $S=\diag(s_1,...,s_n)$ of $M$ is given by 
\[
s_{n+1-i}=\prod_{\substack{k \\ m(k) \geq i}} k,
\]
where $m(k)$ denotes the multiplicity of the eigenvalue $k$ of $M$.
\end{prop}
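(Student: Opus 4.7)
The plan is to derive the Smith normal form of $M$ over $\Z$ by specializing the assumed Smith normal form of $M + tI$ over $\Z[t]$ at $t = 0$. Write $M + tI = P(t)\, D(t)\, Q(t)$ with $P(t), Q(t) \in GL_n(\Z[t])$ and $D(t) = \diag(F_1(t), \ldots, F_n(t))$, $F_i \mid F_{i+1}$ in $\Z[t]$. Since the units of $\Z[t]$ are $\pm 1$, $\det P(t)$ and $\det Q(t)$ are constants in $\{\pm 1\}$, so $P(0), Q(0) \in GL_n(\Z)$. Setting $t = 0$ gives $M = P(0)\, D(0)\, Q(0)$, and the relations $F_i(0) \mid F_{i+1}(0)$ persist in $\Z$, so $D(0) = \diag(F_1(0), \ldots, F_n(0))$ is a Smith normal form for $M$ over $\Z$. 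It therefore suffices to identify $F_i(0)$ for each $i$.

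Next I compute the invariant factors of $M + tI$ over $\Q[t]$, where everything is governed by a PID. Since $M$ is semisimple with integer eigenvalues, $M$ is conjugate over $\Q$ to $\diag(\lambda_1, \ldots, \lambda_n)$, so $M + tI$ is equivalent over $\Q[t]$ to $\diag(t + \lambda_1, \ldots, t + \lambda_n)$. Grouping by distinct eigenvalues and using the Chinese Remainder Theorem (the linear factors $t+k$ for distinct $k$ are pairwise coprime), the structure theorem yields invariant factors
\[
f_{n+1-i}(t) = \prod_{k : m(k) \geq i} (t + k), \qquad i = 1, \ldots, n.
\]

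To lift this back to $\Z[t]$, I would note that each $F_i \in \Z[t]$ is, in $\Q[t]$, also an invariant factor of $M+tI$, so $F_i = \alpha_i f_i$ for some $\alpha_i \in \Q^\times$. Since the $f_i$ are monic (hence primitive) and the $F_i$ are integral, $\alpha_i \in \Z$. The divisibility $F_i \mid F_{i+1}$ in $\Z[t]$ forces $\alpha_i \mid \alpha_{i+1}$ in $\Z$, while the determinantal identity $\prod_i F_i = \pm \det(M + tI) = \pm \prod_i f_i$ forces $\prod_i \alpha_i = \pm 1$. Combined, these give $\alpha_i = \pm 1$ for all $i$, so $F_{n+1-i}(0) = \pm \prod_{k : m(k) \geq i} k$, which is the advertised formula (up to the customary sign ambiguity of Smith normal form). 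The main obstacle is precisely this last step — ruling out nontrivial integer rescalings between the $\Q[t]$-invariant factors and the integral $F_i$ — which is resolved by the determinant and divisibility constraints together.
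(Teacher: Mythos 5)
Your proof is correct. The paper imports this result from \cite{MR} without reproving it, and your argument --- specializing the $\Z[t]$ Smith form at $t=0$ after identifying its diagonal entries, via leading coefficients and the determinant identity, with the monic $\Q[t]$-invariant factors $\prod_{k:\,m(k)\ge i}(t+k)$ --- is essentially the standard one given there.
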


Since $UD$ is semisimple and has integer eigenvalues for any differential poset by Theorem \ref{thm:DP eigenvalues}, if Conjecture \ref{conj:Miller-Reiner} holds for some differential poset $P$, then Proposition \ref{prop:snf} uniquely determines the Smith normal form of $UD$.

Shah showed that Conjecture \ref{conj:Miller-Reiner} is true in the cases of interest to us here:
\begin{theorem}[\cite{Sh}, Corollary 5.2] \label{thm: MR for Y}
For any $r \geq 1$:
\begin{itemize}
\item[a.] Conjecture \ref{conj:Miller-Reiner} holds for $Y^r$.
\item[b.] For all $n$ the down maps $D_n: \Z^{p_n} \to \Z^{p_{n-1}}$ in $Y^r$ are surjective.
\end{itemize}
\end{theorem}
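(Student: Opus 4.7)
The plan is to prove (b) first by a triangular matrix argument, and then to use (b) together with Stanley's eigenspace decomposition to exhibit a Smith normal form for (a).

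For (b), I adapt a classical triangular argument directly to $Y^r$. Elements of $(Y^r)_{n-1}$ are $r$-tuples $\mu = (\mu^{(1)}, \ldots, \mu^{(r)})$ of partitions with $\sum_i |\mu^{(i)}| = n-1$. Order them by lex on the concatenation $(\mu^{(1)}_1, \mu^{(1)}_2, \ldots, \mu^{(2)}_1, \ldots, \mu^{(r)}_1, \ldots)$ so that, in particular, a larger value of $\mu^{(1)}_1$ places a tuple later. For each $\mu$, define $\lambda(\mu) \in (Y^r)_n$ by adding a box to the first row of $\mu^{(1)}$. Then $D\lambda(\mu)$ contains $\mu$ (obtained by removing the row-$1$ corner of $\lambda^{(1)}(\mu)$), while every other summand either removes a row-$j$ corner of $\lambda^{(1)}(\mu)$ for $j \geq 2$ or removes a corner from $\mu^{(i)}$ for $i \geq 2$; in every such summand the first component has first part $\mu^{(1)}_1 + 1$, strictly later in our order. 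The resulting $p_{n-1} \times p_{n-1}$ submatrix of $D_n$ with columns indexed by $\{\lambda(\mu)\}$ is therefore lower unitriangular, so $D_n$ is surjective over $\Z$.

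For (a), the surjectivity from (b) splits $\Z^{(Y^r)_n} = \ker(D_n) \oplus C_n$ with $D_n|_{C_n}\colon C_n \xrightarrow{\sim} \Z^{(Y^r)_{n-1}}$, showing that $\ker(D_n)$ is a rank-$\Delta p_n$ direct summand. By Stanley's Theorem \ref{thm:DP eigenspaces}, the rational $ri$-eigenspace of $U_{n-1}D_n$ is $U^i\ker(D_{n-i})$, of dimension $\Delta p_{n-i}$. Recursively lifting $\Z$-bases of $\ker(D_{n-i})$ and applying $U^i$, I obtain bases of these eigenspaces over $\Q$; after saturating each inside $\Z^{(Y^r)_n}$, the combined basis puts $U_{n-1}D_n$ into block-diagonal form with block $ri\cdot I_{\Delta p_{n-i}}$. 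Adding $tI$ and grouping the diagonal entries in divisibility-increasing order then yields an SNF of $U_{n-1}D_n + tI$ over $\Z[t]$ whose $j$-th diagonal entry is $a_j(t) = \prod_{k\,:\, m(k) \geq p_n + 1 - j}(t+k)$; at $t=0$ this matches the formula in Proposition \ref{prop:snf}.

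The main obstacle is step (a): verifying that iterating $U$ on an integral basis of $\ker(D_{n-i})$ yields a $\Z$-basis of the saturated $ri$-eigenspace inside $\Z^{(Y^r)_n}$ rather than merely a rational basis, and then organizing the block-diagonal data into the divisibility pattern $a_j \mid a_{j+1}$. Resolving both points typically requires leveraging the combinatorics special to $Y^r$ — standard Young tableaux, the Robinson--Schensted--Knuth correspondence, and other structures peculiar to products of Young's lattices — and this is where Shah's explicit construction enters.
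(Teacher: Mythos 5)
This statement carries no proof in the paper at all: it is imported verbatim from Shah \cite{Sh}, so there is no internal argument to measure yours against. That said, your part (b) is correct and self-contained: the map $\bs{\mu} \mapsto \bs{\lambda}(\bs{\mu})$ adding a box to the first row of $\mu^{(1)}$ is injective, the new box is a removable corner of $\lambda^{(1)}(\bs{\mu})$ (since its first row now strictly exceeds its second), so $\bs{\mu}$ appears in $D\bs{\lambda}(\bs{\mu})$ with coefficient $1$, and every other summand has first component with first part $\mu^{(1)}_1+1$, hence lies strictly later in your order. The resulting $p_{n-1}\times p_{n-1}$ submatrix is unitriangular and $D_n$ is surjective over $\Z$; this is the same style of triangularity argument the paper itself deploys in the proof of Proposition \ref{prop:ones of UkDk}.

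Part (a), however, has a genuine gap, and it sits exactly where the entire difficulty of Conjecture \ref{conj:Miller-Reiner} lives. The rational eigenspace decomposition is already free from Theorem \ref{thm:DP eigenspaces}; the conjecture is an integrality statement. Even if each saturated eigenlattice $E_n(ri)\cap\Z^{(Y^r)_n}$ had the basis you describe, the direct sum of saturated eigenlattices of an integer matrix is in general only a finite-index sublattice of the ambient lattice, so your ``combined basis'' is not a $\Z$-basis and the change of basis is not in $GL_{p_n}(\Z)$, let alone $GL_{p_n}(\Z[t])$. Concretely, for the $2\times 2$ symmetric matrix $M$ with diagonal entries $1$ and off-diagonal entries $2$, the eigenvalues are $3$ and $-1$, both eigenlattices are saturated, yet they span an index-$2$ sublattice of $\Z^2$, and $M+tI$ has \emph{no} Smith normal form over $\Z[t]$: its entries generate the non-principal ideal $(2,\,1+t)$, whereas the $1\times 1$ Fitting ideal of any candidate $\diag(s_1,s_2)$ would be principal. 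So block-diagonalization over $\Q$ plus saturation cannot yield the conclusion, and the obstacle is not only the one you name (integrality of $U^i$ applied to bases of $\ker D_{n-i}$) but the global compatibility of the eigenlattices. Your closing sentence concedes that ``this is where Shah's explicit construction enters,'' but that construction is the whole content of part (a), not a deferrable detail; as written, the proposal for (a) reduces the theorem to itself.
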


\section{Maps induced between critical groups} \label{sec:maps}
For $\sigma: H \to G$ a group homomorphism and $W$ a representation of $G$, we let $W^{\sigma}$ denote the representation of $H$ given by $h \cdot w := \sigma(h)w$ for all $h \in H, w \in W$.  If $\sigma$ is the inclusion of a subgroup, then $W^{\sigma}=\Res^G_H W$.  If $\sigma$ is an automorphism of $G$, then $W^{\sigma}$ corresponds to the usual notion of \textit{twisting} by $\sigma$.  We extend by linearity to define $W^{\sigma}$ for $W$ a virtual representation.

\begin{theorem} \label{Thm:induced-map}
Let $\sigma: H \inj G$ be an injective group homomorphism and $V$ a faithful representation of $G$, then $\bar{\sigma}: [W]\mapsto [W^{\sigma}]$ is a well-defined group homomorphism $K(V) \to K(V^{\sigma})$.  If $\sigma$ is an isomorphism, then so is $\bar{\sigma}$.
\end{theorem}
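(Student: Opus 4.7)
The plan is to observe that twisting a representation by $\sigma$ extends to a dimension-preserving ring homomorphism $\sigma^*: R(G) \to R(H)$ sending $[W]$ to $[W^\sigma]$, and that this map intertwines the multiplication operators $\tilde{C}_V$ and $\tilde{C}_{V^\sigma}$. Once that is established, $\bar\sigma$ will descend to cokernels by a formal argument.

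Before doing anything else I would verify that $V^\sigma$ is a faithful representation of $H$, so that $K(V^\sigma)$ is defined via Definition-Proposition~\ref{def-prop:critical group}: if $h \in H$ acts trivially on $V^\sigma$, then $\sigma(h)$ acts trivially on $V$, and faithfulness of $V$ combined with injectivity of $\sigma$ forces $h = e$.

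Next comes the central computation. Twisting respects direct sums, tensor products, and the trivial representation---that is, $(W \oplus W')^\sigma = W^\sigma \oplus (W')^\sigma$, $(W \otimes_\C W')^\sigma = W^\sigma \otimes_\C (W')^\sigma$, and $\mathbbm{1}_G^\sigma = \mathbbm{1}_H$---so $\sigma^*$ is a ring homomorphism. Since $W^\sigma$ shares its underlying vector space with $W$, we have $\dim \circ \sigma^* = \dim$, and $\sigma^*$ restricts to a $\Z$-module map $R_0(G) \to R_0(H)$. For any $x \in R_0(G)$ the ring-homomorphism property then gives
\[
\sigma^*\bigl(\tilde{C}_V(x)\bigr) = \sigma^*\bigl((\dim(V)[\mathbbm{1}_G] - [V]) \cdot x\bigr) = \bigl(\dim(V^\sigma)[\mathbbm{1}_H] - [V^\sigma]\bigr) \cdot \sigma^*(x) = \tilde{C}_{V^\sigma}\bigl(\sigma^*(x)\bigr),
\]
so $\sigma^*$ carries $\im C_V$ into $\im C_{V^\sigma}$ and therefore descends to a well-defined homomorphism $\bar\sigma: K(V) \to K(V^\sigma)$ on cokernels.

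For the final claim, when $\sigma$ is an isomorphism the same construction applied to $\sigma^{-1}$ gives $\overline{\sigma^{-1}}: K(V^\sigma) \to K\bigl((V^\sigma)^{\sigma^{-1}}\bigr) = K(V)$, and the identities $(\sigma^{-1})^* \circ \sigma^* = \id_{R(G)}$ and $\sigma^* \circ (\sigma^{-1})^* = \id_{R(H)}$ make these two maps mutually inverse. I do not expect a genuine obstacle in any of this: the argument is essentially formal once one recognizes $[W] \mapsto [W^\sigma]$ as a ring map, and the only substantive input is that $\dim(V) = \dim(V^\sigma)$, which ensures the scalar appearing in $\tilde{C}_V$ is matched correctly by the corresponding scalar in $\tilde{C}_{V^\sigma}$.
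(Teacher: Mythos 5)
Your proposal is correct and follows essentially the same route as the paper: both arguments hinge on the intertwining identity $\sigma^*\circ\tilde{C}_V=\tilde{C}_{V^\sigma}\circ\sigma^*$ (the paper phrases it as a commutative diagram), restriction to $R_0$ via dimension-preservation, and inverting via $\overline{\sigma^{-1}}$ in the isomorphism case. Your explicit check that $V^\sigma$ is faithful is a welcome bit of added care, but the substance matches the paper's proof.
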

\begin{proof}
First observe that the following diagram commutes:
\[
\begin{CD}
R(G) @> \bar{\sigma} >> R(H) \\
@V[\text{$V$}]\cdot(-)VV @VV [\text{$V^{\sigma}$}]\cdot (-) V \\
R(G) @> \bar{\sigma} >> R(H) 
\end{CD}
\]
This is because for any genuine representation $W$, we have
\[
[V^{\sigma}]\cdot [W^{\sigma}]=[V^{\sigma} \otimes W^{\sigma}]=[(V \otimes W)^{\sigma}],
\]
and extending by linearity gives the desired result.  Since $\bar{\sigma}$ preserves virtual dimension, the above diagram restricts to $R_0(G)$ and $R_0(H)$.  The commutativity of the resulting diagram is exactly the condition needed to ensure that the map of quotient groups $K(V) \to K(V^{\sigma})$ is well-defined, and the injectivity of $\sigma$ guarantees that $V^{\sigma}$ is a faithful representation of $H$.  If $\sigma$ is invertible, then it is easy to see that $\bar{\sigma^{-1}}=\bar{\sigma}^{-1}$. 
\end{proof}

\begin{example}
Let $\sigma$ denote the unique outer automorphism of $\mf{S}_6$ (the map $\bar{\sigma}$ is uninteresting for inner automorphisms since $W \cong W^{\sigma}$).  Indexing the irreducible representations of $\mf{S}_6$ by partitions in the usual way, the action of $\bar{\sigma}$ is
\begin{align*}
V_{(5,1)} & \leftrightarrow V_{(2,2,2)}, \\
V_{(2,1^4)} & \leftrightarrow V_{(3,3)}, \\
V_{(4,1,1)} & \leftrightarrow V_{(3,1^3)},
\end{align*}
with the remaining irreducible representations fixed \cite{Wi}.  One can calculate that 
\begin{align*}
K(V_{(5,1)}) & \cong K(V_{(2,2,2)}) \cong (\Z/6 \Z)^2 \oplus \Z/120 \Z,  \\
K(V_{(2,1^4)}) & \cong  K(V_{(3,3)}) \cong \Z/24 \Z \oplus \Z/480 \Z, \\
K(V_{(4,1,1)}) & \cong K(V_{(3,1^3)}) \cong \Z/3 \Z \oplus \Z/90 \Z \oplus \Z/47520 \Z.
\end{align*}
\end{example}

In the case $\sigma: H \hookrightarrow G$, one might have hoped that, in analogy with Proposition \ref{prop:graph covering surj}, the map $\bar{\sigma}: [W] \mapsto [\Res^{G}_H W]$ would be surjective on critical groups; the following example shows that this is not the case for general groups $H \subset G$.
 
\begin{example}
Let $G=D_5$ be the dihedral group of order 10, and let $V$ be the direct sum of a two-dimensional irreducible and the non-trivial one-dimensional irreducible.  This is the complexification of the action of $G$ in $\R^3$ by rotation of a fixed plane and reflection across that plane.  One can calculate (see \cite{G3}, Appendix C) that $K(V)\cong \Z/2 \Z$.  Letting $H=C_5$ be the cyclic subgroup, however, one can show that $K(\Res^G_H V) \cong \Z/5 \Z$.  Thus $\bar{\Res}: K(V) \to K(\Res^G_H V)$ cannot be surjective.

This is a natural counterexample to pick, since $C_5$ has more conjugacy classes than $D_5$, and so $\Res: R(D_5) \to R(C_5)$ cannot be surjective.
\end{example}

There are two classes of groups for which $\bar{\Res}$ can be seen to be surjective for all $V$, both of which will be investigated further throughout the paper.

\begin{prop} \label{prop:surj}
The map $\bar{\Res}:K(V) \to K(\Res^G_H V)$ is surjective if:
\begin{itemize}
\item[(i)] $G$ is abelian,
\item[(ii)] $G=A \wr \mf{S}_n$ and $H=A \wr \mf{S}_m$ for $A$ an abelian group and $m \leq n$.
\end{itemize}
\end{prop}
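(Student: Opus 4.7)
My plan is to reduce both parts of the proposition to exhibiting surjectivity of $\Res: R(G) \to R(H)$ at the level of the full representation rings, from which the rest is formal: since $\Res$ preserves virtual dimension, it restricts to a surjection $R_0(G) \sur R_0(H)$, and hence descends to a surjection on the critical groups $\bar{\Res}: R_0(G)/\im(C_V) \sur R_0(H)/\im(C_{\Res V})$. So for each of (i) and (ii), the real content is surjectivity of $\Res$ upstream.

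For part (i), my plan is to invoke the standard fact that for finite abelian groups $H \subset G$, every character of $H$ extends to a character of $G$; equivalently, the dual map $\hat{G} \sur \hat{H}$ is surjective (Pontryagin duality applied to $H \inj G$, or exactness of the character-group functor on finite abelian groups). Since the irreducibles of an abelian group form a $\Z$-basis of the representation ring, every basis element of $R(H)$ is then visibly the restriction of some basis element of $R(G)$, and so $\Res$ is surjective. An equivalent route, which the paper already sets up, is to cite Theorem \ref{thm:cay-graph} of Section \ref{subsec:cayley}, which identifies $\bar{\Res}$ with the map on Cayley-graph critical groups induced by a natural graph covering, so that Proposition \ref{prop:graph covering surj} applies directly.

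For part (ii), my plan is to exploit the $r$-differential tower structure on $A \wr \mf{S}$ (with $r = |A|$), whose associated poset is $P(A \wr \mf{S}) = Y^r$. It suffices to handle the single-step case $m = n-1$, since a composition of surjections along the tower then covers general $m \leq n$. Under the identification $P_n \leftrightarrow \Irr(G_n)$, the multiplicity-free branching rule (DTG1) gives $\Res^{G_n}_{G_{n-1}} V_{\lambda} = \sum_{\mu \lessdot \lambda} V_{\mu}$, so the matrix of $\Res$ in the bases of irreducibles coincides with that of the combinatorial down map $D_n: \Z^{P_n} \to \Z^{P_{n-1}}$ in $Y^r$. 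Shah's Theorem \ref{thm: MR for Y}(b) then asserts exactly that $D_n$ is surjective, and hence so is $\Res$. The main obstacle in each part is really just the bookkeeping step of identifying $\bar{\Res}$ with the appropriate combinatorial or topological map (character restriction / graph covering in (i), the down map in $Y^r$ in (ii)) so that the already-available deep input can be applied; the passage from surjectivity on representation rings through $R_0$ and then to the critical-group quotient is purely formal.
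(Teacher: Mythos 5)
Your proposal is correct and follows essentially the same route as the paper: the paper's proof also reduces both cases to surjectivity of $\Res: R(G) \to R(H)$, treating (i) as clear (your extension-of-characters argument is the standard justification) and deducing (ii) from the differential tower structure of $A \wr \mf{S}$ together with Theorem \ref{thm: MR for Y}. You have merely filled in the details the paper leaves implicit, including the formal descent through $R_0$ to the critical-group quotients.
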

\begin{proof}
In both cases the map $\Res:R(G) \to R(H)$ is already surjective.  This is clear in case (i); in case (ii) this follows from Theorem \ref{thm: MR for Y} and the fact that $A \wr \mf{S}$ is a differential tower of groups with corresponding differential poset $Y^r$, where $r=|A|$.
\end{proof}

For completeness, we also mention that induction induces a map in the opposite direction:

\begin{prop} \label{prop:ind map}
The map $[W] \mapsto [\Ind^G_H W]$ induces a map on critical groups $\bar{\Ind}: K(\Res V) \to K(V)$.  If $\bar{\Res}$ is a surjection, then $\bar{\Ind}$ is an injection.
\end{prop}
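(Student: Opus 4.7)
The plan is two-stage: first verify well-definedness of $\bar{\Ind}$ using the projection formula, then deduce injectivity via a snake lemma argument applied to a short exact sequence built from $\Ind$.

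\textbf{Well-definedness.} The projection formula $\Ind(W\otimes\Res V)=\Ind(W)\otimes V$ implies $\Ind\circ\tilde{C}_{\Res V}=\tilde{C}_V\circ\Ind$, giving a commuting square
\[
\begin{CD}
R(H) @>\Ind>> R(G) \\
@V\tilde{C}_{\Res V}VV @VV\tilde{C}_V V \\
R(H) @>\Ind>> R(G)
\end{CD}
\]
that descends to a homomorphism $\coker\tilde{C}_{\Res V}\to\coker\tilde{C}_V$. Since $\Ind\mathbb{C}[H]=\mathbb{C}[G]$, we have $\Ind(\delta_H^{(e)})=\delta_G^{(e)}$, so this homomorphism respects the decomposition $\coker\tilde{C}=\Z\delta^{(e)}\oplus K$ of Definition-Proposition \ref{def-prop:critical group}, yielding $\bar{\Ind}:K(\Res V)\to K(V)$.

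\textbf{Injectivity via snake lemma.} Frobenius reciprocity $\langle\Ind W,V_\lambda\rangle=\langle W,\Res V_\lambda\rangle$ shows that the matrices of $\Ind$ and $\Res$ in the basis of irreducible characters are transposes of each other. Hence surjectivity of $\Res:R(G)\to R(H)$ as a $\Z$-linear map---the form in which $\bar{\Res}$ surjectivity is produced in both cases of Proposition \ref{prop:surj}---is equivalent to $\Ind$ being split injective, so $\coker\Ind$ is torsion-free. From the short exact sequence $0\to R(H)\xrightarrow{\Ind}R(G)\to\coker\Ind\to 0$ and the endomorphism $\bar{\tilde{C}}$ of $\coker\Ind$ induced by $\tilde{C}_V$ (well-defined because $\tilde{C}_V(\Ind R(H))=\Ind(\tilde{C}_{\Res V}R(H))\subseteq\Ind R(H)$), I would apply the snake lemma. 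Since $\Ind$ carries $\ker\tilde{C}_{\Res V}=\Z\delta_H^{(e)}$ isomorphically onto $\ker\tilde{C}_V=\Z\delta_G^{(e)}$, the snake sequence collapses to
\[
0\to\ker\bar{\tilde{C}}\to\coker\tilde{C}_{\Res V}\xrightarrow{\bar{\Ind}}\coker\tilde{C}_V\to\coker\bar{\tilde{C}}\to 0,
\]
identifying $\ker\bar{\Ind}\cong\ker\bar{\tilde{C}}$.

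\textbf{Main obstacle: $\ker\bar{\tilde{C}}=0$.} The operator $\bar{\tilde{C}}$ is multiplication by $\dim(V)\mathbbm{1}_G-[V]$ on $\coker\Ind$. By Proposition \ref{prop:eigenvectors}, on $R(G)\otimes\Q$ this multiplication is diagonalizable with one-dimensional null eigenspace $\Q\delta_G^{(e)}$; since $\delta_G^{(e)}=\Ind\delta_H^{(e)}$ lies in $\Ind R(H)$, this eigenvector vanishes in $\coker\Ind\otimes\Q$, so $\bar{\tilde{C}}$ is rationally injective and $\ker\bar{\tilde{C}}$ is torsion. The torsion-freeness of $\coker\Ind$ then forces $\ker\bar{\tilde{C}}=0$, giving injectivity of $\bar{\Ind}$. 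The delicate step is precisely this transition from rational to integral injectivity: the hypothesis must be strong enough to make $\Ind$ a \emph{split} injection of free $\Z$-modules---not merely an injection---so that no hidden torsion in $\coker\Ind$ can produce unwanted kernel.
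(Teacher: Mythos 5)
Your proof is correct and takes essentially the same route as the paper's: the projection formula gives well-definedness, and injectivity comes from the observation that $\Ind$ is the transpose of $\Res$ together with the snake lemma. The only differences are in execution --- you run the snake lemma on $\tilde{C}_V$ acting on all of $R(G)$, which forces the extra eigenvector argument to kill $\ker\bar{\tilde{C}}$, whereas applying it to the nonsingular map $C_V$ on $R_0(G)$ makes rational injectivity of the induced map on $\coker(\Ind)$ automatic; and, like the paper, what you actually use is surjectivity of $\Res$ on representation rings (the form supplied by Proposition \ref{prop:surj}), a point you rightly flag, rather than the literal hypothesis that $\bar{\Res}$ is surjective.
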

\begin{proof}
To see that $\Ind$ induces a map on critical groups, one easily checks that $(\Ind \: W) \otimes_{\C} V \cong \Ind(W \otimes \Res \: V)$, so that the required diagram commutes.  The second claim follows from the observation that the map $\Ind:R(H) \to R(G)$ is the transpose of the map $\Res:R(G) \to R(H)$ and a standard application of the Snake Lemma.
\end{proof}

\subsection{Cayley graph covering maps}
\label{subsec:cayley}

In this section we investigate the relationship between critical groups of group representations and critical groups of graphs when $G$ is abelian. 

For any finite group $G$, we let $\hat{G}=\Hom(G, \C^{\times})$ denote the Pontryagin dual group.  When $G$ is abelian, all irreducible representations are 1-dimensional, and so $\hat{G}$ is equal to the group of irreducible characters of $G$ under point-wise multiplication.  If $V$ is a faithful representation of an abelian group $G$, then the multiset $\mc{S}_V$ of characters of irreducible components appearing in $V$ generates $\hat{G}$ as a group.  This follows from the standard fact that all irreducible representations of a finite group appear as factors in a sufficiently large tensor power of a fixed faithful representation.

If $G$ is a group with generating multiset $\mc{S}$, the \textit{Cayley graph} $\Cay(G, \mc{S})$ is the directed multigraph with vertex set $G$ and directed edges $g \to gx$ whenever $x \in \mc{S}$.  See Figure \ref{fig:Cayley-graph} for an example of this construction.

\begin{theorem} \label{thm:cay-graph}
For $V$ a faithful representation of an abelian group $G$ the critical groups $K(V)$ and $K(\Cay(\hat{G}, \mc{S}_V))$ can be naturally identified, and the diagram
\begin{center}
$\begin{CD}
K(V) @= K(\Cay(\hat{G}, \mc{S}_V)) \\
@V\bar{\Res}VV @VV\bar{\ph}V \\
K(\Res^G_H V) @= K(\Cay(\hat{H}, \mc{S}_{\Res V}))
\end{CD}$
\end{center}
commutes, where $\bar{\ph}$ is the surjection on critical groups induced by the natural graph covering map $\ph:\Cay(\hat{G},\mc{S}_{V}) \to \Cay(\hat{H}, \mc{S}_{\Res V})$.
\end{theorem}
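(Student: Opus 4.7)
The plan is to identify the representation ring $R(G)$ with the group ring $\Z[\hat G]$ so that $\tilde C_V$ becomes (up to a harmless transposition) the Cayley-graph Laplacian, and then to observe that both horizontal maps in the diagram are induced by the same map $\chi \mapsto \chi|_H$ of character groups.

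Since $G$ is abelian, all irreducibles are one-dimensional and the assignment $\chi \mapsto [V_\chi]$ gives a ring isomorphism $\Z[\hat G] \xrightarrow{\sim} R(G)$. Under this isomorphism, $[V]$ corresponds to $\sum_{\psi \in \mc S_V} \psi$, so $\tilde C_V$ is multiplication by $|\mc S_V| \cdot 1 - \sum_{\psi \in \mc S_V}\psi$. A direct entrywise calculation in the basis indexed by $\hat G$ shows that the matrix of this multiplication operator coincides with the Laplacian $\tilde L(\Cay(\hat G, \mc S_V))$ up to transposition, which does not affect Smith normal form and hence does not affect the cokernel. Moreover, $\delta^{(e)} = \sum_{\chi \in \hat G} [V_\chi]$ corresponds to the all-ones vector, which spans $\ker \tilde L$ as well as $\ker \tilde C_V$. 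Quotienting by this kernel yields the natural identification $K(V) \cong K(\Cay(\hat G, \mc S_V))$, and likewise for $H$.

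Next I would check that $\ph$, defined on vertices by $\chi \mapsto \chi|_H$, is a genuine graph covering. The crucial observation is that $\Res V = \bigoplus_{\chi \in \mc S_V} V_{\chi|_H}$ implies $\mc S_{\Res V} = \ph(\mc S_V)$ as multisets, so $\ph$ extends to edges via $(\chi \to \chi\psi) \mapsto (\chi|_H \to \chi|_H \cdot \psi|_H)$, and this extension bijects outgoing edges at $\chi$ with outgoing edges at $\chi|_H$. Thus $\ph$ is a covering, and by Proposition \ref{prop:graph covering surj} the induced map $\bar\ph$ on critical groups is surjective.

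Finally, commutativity of the diagram is essentially tautological once the identifications are in hand: under the isomorphisms $R(G) \cong \Z[\hat G]$ and $R(H) \cong \Z[\hat H]$, the restriction map $\Res : R(G) \to R(H)$ corresponds to the $\Z$-linear extension of $\chi \mapsto \chi|_H$, which is precisely the map $\ph_*$ induced by $\ph$ on vertices. The main technical care needed is ensuring that the identifications of $\tilde C_V$ with $\tilde L$ for $G$ and for $H$ are applied consistently (matching the transposition convention on both sides); once this is done, the square commutes on the level of the ambient lattices $\Z[\hat G]$ and $\Z[\hat H]$, hence also on the critical-group quotients. This is the only real obstacle, and it is handled once and for all by a careful choice of basis indexing.
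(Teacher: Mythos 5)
Your proposal is correct and follows essentially the same route as the paper: identify $R(G)$ with $\Z[\hat{G}]$ so that $\tilde{C}_V$ becomes the Laplacian of $\Cay(\hat{G},\mc{S}_V)$, and observe that $\Res$ on characters is exactly the vertex map of the covering $\ph$. The only difference is that you are more careful than the paper about the possible transposition and about verifying that $\ph$ is genuinely a covering; note that under the paper's convention (rows of $\tilde{C}_V$ record the decompositions of $V\otimes V_\chi$) the matrices are literally equal, so the transposition issue disappears.
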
 
\begin{proof}
Define a map $\ph: \hat{G} \sur \hat{H}$ by $\chi \mapsto \Res^G_H \chi$.  Now, $\hat{G}$ and $\hat{H}$ are the vertex sets of the Cayley graphs in question, so $\ph$ induces a graph map $\ph: \Cay(\hat{G}, \mc{S}_V) \sur \Cay(\hat{H}, \mc{S}_{\Res{V}})$ by sending each edge $\chi \to \chi \cdot \psi$ to the edge $\Res \chi \to (\Res \chi)\cdot (\Res \psi)$.  Now, identifying the basis of irreducibles in $R(G)$ and $R(H)$ with the elements of $\hat{G}$ and $\hat{H}$ respectively, it is clear from the definitions that $\tilde{C}_V$ and $\tilde{L}(\Cay(\hat{G},\mc{S}_V))$ define the same linear maps, and that, under this identification $\bar{\ph}$ and $\bar{\Res}$ agree.
\end{proof}

\begin{figure}
\begin{center}
\begin{tikzpicture}[->,>=stealth',shorten >=1pt,auto,node distance=1.6cm,
                thick,main node/.style={circle,draw,font=\Large\bfseries}, baseline=(current bounding box.center)]

  \node[main node] (1) {1};
  \node[main node] (2) [right of=1, fill=gray] {$\zeta$};
  \node[main node] (3) [below right of=2] {$\zeta^2$};
  \node[main node] (4) [below left of=3, fill=gray] {$\zeta^3$};
  \node[main node] (5) [left of=4] {$\zeta^4$};
  \node[main node] (6) [below left of=1, fill=gray] {$\zeta^5$};

  \path
    (1) edge node [green, thick]{} (2)
  		edge node []{} (4)
    (2) edge node []{} (3)
    	edge node []{} (5)
   (3) edge node []{} (4)
  		edge node []{} (6)
    (4) edge node []{} (5)
  		edge node []{} (1)
    (5) edge node []{} (6)
  		edge node []{} (2)
    (6) edge node []{} (1)
  		edge node []{} (3);
\end{tikzpicture}
\hspace{0.2 cm}
$\xrightarrow{\ph}$
\hspace{0.2 cm}
\begin{tikzpicture}[->,>=stealth',shorten >=1pt,auto,node distance=1.6cm,
                thick,main node/.style={circle,draw,font=\Large\bfseries}, baseline=(current bounding box.center)]

  \node[main node] (1) {1};
  \node[main node] (2) [below of=1, fill=gray] {$\zeta^3$};

  \path
    (1) [<->,thick] edge node [right]{2} (2);
   
\end{tikzpicture}
\end{center}
\caption{The Cayley graphs for the case $G=\langle g \rangle \cong \Z/6 \Z$ and $H=\langle g^3 \rangle \cong \Z/ 2 \Z$ with $V$ given by $g \mapsto \diag(\zeta, \zeta^3)$, where $\zeta$ is a primitive sixth root of unity.}
\label{fig:Cayley-graph}
\end{figure}

\begin{remark}
For graph covering maps $\ph:\Gamma \to \Gamma'$, Reiner and Tseng \cite{RT} give an interpretation of the kernel of $\bar{\ph}:K(\Gamma) \sur K(\Gamma')$ as a certain ``voltage graph critical group''.  Thus the identification in Theorem \ref{thm:cay-graph} allows one to describe the kernel of $\bar{\Res}$ in these same terms in the abelian group case. 
\end{remark}
\section{Critical groups and differential posets}\label{sec:gen-perm-rep}

By a \textit{word} of length $2k$, we mean a sequence $w=w_1...w_{2k}$ of $U$'s and $D$'s.  A word $w$ is \textit{balanced} if the number of $U$'s is equal to the number of $D$'s.  When a tower of groups $G_0 \subset G_1 \subset \cdots$ is clear from context, we let $w(\Ind, \Res)$ denote the linear operator $\bigoplus_i R(G_i) \to \bigoplus_i R(G_i)$ defined by replacing the $U$'s in $w$ with $\Ind$ and the $D$'s with $\Res$ and viewing the resulting sequence as a composition of linear operators.  We always assume that induction and restriction are between consecutive groups in the sequence and that $\Res [V]=0$ for $[V] \in R(G_0)$.  Similarly, if $P$ is a differential poset, then we let the linear map $w(U,D): \bigoplus_i \Z^{P_i} \to \bigoplus_i \Z^{P_i}$ be defined as the natural composition of linear operators.  When $w$ is balanced, then for each $i$, $w(\Ind, \Res)$ (resp. $w(U,D)$) restricts to a map $R(G_i) \to R(G_i)$ (resp. $\Z^{P_i} \to \Z^{P_i}$).

\begin{example}
Let $\mf{S}$ be the tower of symmetric groups, and $Y=P(\mf{S})$ denote Young's lattice.  Fix $i \geq 1$, then $w(U,D)=UD$ is a map $\Z^{Y_i} \to \Z^{Y_i}$ and $w(\Ind,\Res)$ is a map $R(\mf{S}_i) \to R(\mf{S}_i)$ sending $[W]\mapsto [\Ind^{\mf{S}_{i}}_{\mf{S}_{i-1}}\Res^{\mf{S}_i}_{\mf{S}_{i-1}} W]$.  Thus $w(\Ind, \Res)[\mathbbm{1}_{\mf{S}_i}]=[\Ind_{\mf{S}_{i-1}}^{\mf{S}_i}\mathbbm{1}_{\mf{S}_{i-1}}]$ is the class of the permutation representation of $\mf{S}_i$.  If we identify $\Z^{Y_i}$ with $R(\mf{S}_i)$ via the differential tower of group structure, then one can check that $w(\Ind, \Res)[\mathbbm{1}_{\mf{S}_i}]\cdot (-)$ and $w(U,D)$ in fact agree as linear maps.  This fact will be explained below.
\end{example}

The following basic facts follow from (\cite{O}, Appendix, Theorem A).
\begin{lemma} \label{lem:tensor induced trivial} Let $G$ be a finite group, and let $H \subset G$ be a subgroup.
\begin{itemize}
\item[a.] The operator $\Ind \Res$ on $R(G)$ has a complete system of orthogonal eigenvectors given by 
\[
\delta^{(g)} := \sum_{V \in \Irr(G)} \chi_V(g) \cdot [V]
\]
as $g$ ranges through a set of conjugacy class representatives of $G$.
\item[b.]  The associated eigenvalue equation is 
\[
\Ind \Res \: \delta^{(g)} = \chi_{\Ind \mathbbm{1}_H}(g) \cdot \delta^{(g)}.
\]
\item[c.] $[\Ind \mathbbm{1}_H] \cdot (-)$ and $\Ind \Res$ are equal as linear maps $R(G) \to R(G)$.
\end{itemize}
\end{lemma}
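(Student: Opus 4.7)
The plan is to reduce both statements to Proposition \ref{prop:eigenvectors} by proving that the linear operator $\Ind \Res$ on $R(G)$ coincides with multiplication by the class $[\Ind^G_H \mathbbm{1}_H]$. Once this reduction is in hand, part (b) is immediate, and part (a) follows by reading Proposition \ref{prop:eigenvectors} in the equivalent form: multiplication by any class $[W]$ has the column vectors $\delta^{(g)}$ as an orthogonal eigenbasis with eigenvalues $\chi_W(g)$ (equivalently, $\tilde{C}_W \delta^{(g)} = (\dim(W) - \chi_W(g))\delta^{(g)}$ implies $[W]\cdot\delta^{(g)} = \chi_W(g)\delta^{(g)}$). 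Taking $W = \Ind^G_H \mathbbm{1}_H$ then gives both statements simultaneously. The required orthogonality of the $\delta^{(g)}$ is just column orthogonality of the character table.

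The only substantive step is thus the projection-formula identity
\[
\Ind^G_H \bigl(\Res^G_H W\bigr) \;\cong\; W \otimes_{\C} \Ind^G_H \mathbbm{1}_H
\]
of $G$-representations, for any representation $W$ of $G$. I would verify it at the level of characters. The Frobenius induction formula gives
\[
\chi_{\Ind U}(g) = \frac{1}{|H|} \sum_{\substack{x \in G \\ x^{-1}gx \in H}} \chi_U(x^{-1}gx)
\]
for any representation $U$ of $H$. Applying this to $U = \Res W$ and using the fact that $\chi_W$ is already a class function on all of $G$ lets one pull $\chi_W(g) = \chi_W(x^{-1}gx)$ out of the sum, leaving exactly $\chi_W(g)\cdot \chi_{\Ind \mathbbm{1}_H}(g)$. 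Since complex representations of finite groups are determined up to isomorphism by their characters, this establishes the isomorphism.

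Translating the isomorphism of $G$-representations into the equation $\Ind \Res [W] = [\Ind^G_H \mathbbm{1}_H]\cdot [W]$ in $R(G)$ and extending $\Z$-linearly in $[W]$ shows that $\Ind\Res$ and $[\Ind^G_H \mathbbm{1}_H]\cdot(-)$ agree as linear maps on $R(G)$, completing the reduction. There is no real obstacle here; the only bookkeeping issue is the Frobenius-formula calculation. As an alternative to the character argument, one could give a direct module-theoretic isomorphism $W \otimes_{\C} \C[G/H] \to \C[G] \otimes_{\C[H]} W$ by $w \otimes gH \mapsto g \otimes g^{-1}w$ (well-defined since a change $g \mapsto gh$ changes $g^{-1}w$ by an $H$-action absorbed across the tensor), but the character computation is cleaner given the tools introduced in the paper.
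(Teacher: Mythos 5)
Your proof is correct and complete. The paper itself does not prove this lemma at all: it simply asserts that the statement ``follows from'' Theorem A of the appendix to Okada's paper \cite{O}, so any self-contained argument is by definition a different route. What you supply is the standard one: the projection formula $\Ind^G_H(\Res^G_H W) \cong W \otimes_{\C} \Ind^G_H \mathbbm{1}_H$, verified on characters via the Frobenius induction formula (the key point being that $\chi_W(x^{-1}gx)=\chi_W(g)$ pulls out of the sum), followed by the observation that multiplication by any class $[W]$ in $R(G)$ is diagonalized by the character-table columns $\delta^{(g)}$ with eigenvalues $\chi_W(g)$. Your alternative module-level map $w \otimes gH \mapsto g \otimes g^{-1}w$ is also a valid (and arguably more illuminating) way to get the projection formula without characters. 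The only point worth flagging is cosmetic: Proposition \ref{prop:eigenvectors} is stated in the paper under the standing hypothesis that $V$ is faithful, whereas $\Ind^G_H \mathbbm{1}_H$ is faithful only when $H$ is core-free in $G$; this does not matter, since the diagonalization of $[W]\cdot(-)$ by the $\delta^{(g)}$ holds for arbitrary $W$ (it is just the isomorphism $R(G)\otimes\C \cong \prod_{[g]} \C$ given by evaluating characters at conjugacy classes, with orthogonality of the $\delta^{(g)}$ being column orthogonality), but you should invoke that general fact rather than the literal statement of Proposition \ref{prop:eigenvectors}.
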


When $f=\sum_i c_i w^{(i)}$ is a finite nonnegative sum of balanced words, and a differential tower of groups $\mf{G}$ is understood, write $V(f)_n$ for the representation of $G_n$ given by:
\[
f(\Ind, \Res)[\mathbbm{1}_{G_n}]=\sum_i c_i w^{(i)}(\Ind, \Res)[\mathbbm{1}_{G_n}].
\] 
For example, if $f$ consists of the single word $U^kD^k$, and we are working in the tower $\mf{S}$ of symmetric groups, then $V(f)_n=\Ind_{\mf{S}_{n-k}}^{\mf{S}_n} \mathbbm{1}$ is a basic object of study in the representation theory of the symmetric group.  Under the standard characteristic map $\ch: R(\mf{S}_n) \xrightarrow{\sim} \Lambda_n$ between the representation ring of $\mf{S}_n$ and the ring of degree-$n$ symmetric functions, this representation is sent to the complete homogeneous symmetric function $h_{(n-k,1^k)}$ indexed by a ``hook shape".

\begin{prop} \label{prop:IndRes = UD}
Let $\mf{G}: G_0 \subset G_1 \subset \cdots$ be an $r$-differential tower of groups with corresponding differential poset $P=P(\mf{G})$, let $f$ be a finite nonnegative sum of balanced words.  Then, identifying $\Z^{P_n}$ and $R(G_n)$, the maps $f(U,D)_n$ and $[V(f)_n] \cdot (-)$ are equal.  Furthermore the character values of $V(f)_n$ are equal to the eigenvalues of $f(U,D)_n$.
\end{prop}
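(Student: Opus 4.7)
My plan has three steps. First, under the identification $R(G_n) \cong \Z^{P_n}$ sending irreducibles to the corresponding poset elements, the up and down operators coincide with induction and restriction: by definition of $P(\mf{G})$, $D V_\mu = \sum_{\lambda \lessdot \mu} V_\lambda = \Res V_\mu$ (with multiplicities $1$ by (DTG1)), and Frobenius reciprocity together with multiplicity-freeness gives the analogous statement for $U = \Ind$. Thus $w(U,D)_n = w(\Ind,\Res)_n$ as operators on $R(G_n)$, and by linearity in $f$ the first assertion reduces to showing that for each balanced word $w$ the operator $w(\Ind,\Res)_n$ is multiplication by $V(w)_n := w(\Ind,\Res)[\mathbbm{1}_{G_n}]$.

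The main step is to prove this multiplication-operator statement by induction on the length $2k$ of $w$. I would associate to $w$ its rank-path read right-to-left, which begins and ends at height $n$. If the path returns to $n$ at some intermediate step, $w = w_L w_R$ factors as two shorter balanced words acting on $R(G_n)$, both multiplication operators by induction; their composition is multiplication by $V(w_L)_n \cdot V(w_R)_n$, which equals $V(w)_n$. Otherwise the path stays strictly above or strictly below $n$ in between, so $w = D w' U$ or $w = U w' D$ with $w'$ balanced of length $2k-2$. In the first case, the inductive hypothesis on $w'$ (acting at rank $n+1$), the fact that $\Res$ is a ring map, Lemma \ref{lem:tensor induced trivial}(b), and the relation $\Res\Ind - \Ind\Res = rI$ from (DTG2) together rewrite $w(\Ind,\Res)[X] = \Res[V(w')_{n+1} \cdot \Ind X]$ as multiplication by $\Res V(w')_{n+1} \cdot (\Ind \mathbbm{1}_{G_{n-1}} + r \mathbbm{1}_{G_n})$. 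In the second case, the inductive hypothesis on $w'$ (acting at rank $n-1$) and the projection formula $\Ind(Y \cdot \Res X) = \Ind Y \cdot X$ yield multiplication by $\Ind V(w')_{n-1}$. In each case, evaluating at $X = \mathbbm{1}_{G_n}$ confirms that the multiplier is indeed $V(w)_n$.

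The second assertion then follows immediately: once $f(U,D)_n = [V(f)_n] \cdot (-)$, diagonalizing multiplication by $[V(f)_n]$ on $R(G_n) \otimes \C$ in the $\delta^{(g)}$-basis (standard character theory, and already used in Lemma \ref{lem:tensor induced trivial}) shows that the eigenvalues are exactly the character values $\chi_{V(f)_n}(g)$. I expect the main obstacle to be the $Dw'U$ case of the induction: unlike the $Uw'D$ case, which the projection formula handles cleanly, this one requires (DTG2) to express $\Res\Ind$ as a multiplication operator, and is precisely where the differential-tower axiom enters. A minor side check is that when $n$ is small enough that the rank-path would descend below $0$, both sides vanish consistently thanks to the convention $\Res[V]=0$ for $V \in R(G_0)$.
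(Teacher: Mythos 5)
Your proof is correct, but it takes a genuinely different route from the paper's. The paper argues in two lines: since $DU-UD=rI=\Res\Ind-\Ind\Res$, a balanced word can be normal-ordered on both sides as $w(U,D)=\sum_i c_i U^iD^i$ and $w(\Ind,\Res)=\sum_i c_i \Ind^i\Res^i$ with the \emph{same} integer coefficients $c_i$; then Lemma \ref{lem:tensor induced trivial}, applied to the (generally non-consecutive) subgroup $G_{n-i}\subset G_n$, identifies each $\Ind^i\Res^i$ with multiplication by $[\Ind_{G_{n-i}}^{G_n}\mathbbm{1}]$, and linearity finishes. You instead keep the word intact and induct on its length via the first-return decomposition of the associated lattice path, handling the $Uw'D$ case with the projection formula and the $Dw'U$ case with (DTG2) together with the consecutive-inclusion instance of the lemma; your computation in each case checks out, including the observation that a multiplication operator is pinned down by its value on $[\mathbbm{1}_{G_n}]$. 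What each approach buys: the paper's normal-ordering sidesteps your case analysis entirely and is much shorter, at the price of invoking the lemma for all the inclusions $G_{n-i}\subset G_n$; yours needs only the projection formula and the $H=G_{n-1}$ case of the lemma, and it isolates exactly where the differential axiom enters (your $Dw'U$ case), which you correctly identify as the crux. Both proofs conclude the eigenvalue statement identically, via the $\delta^{(g)}$ eigenbasis of Proposition \ref{prop:eigenvectors}. The only cosmetic omission is the trivial base case of your induction (the empty word acts as multiplication by $[\mathbbm{1}_{G_n}]$), which is worth one sentence.
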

\begin{proof}
We prove the result for a balanced word $w$, the case for general $f$ then follows by linearity.  By the differential tower of group structure, we have 
\[
DU-UD = rI = \Res \Ind - \Ind \Res
\]
Therefore we can write
\begin{align*}
w(U,D)&=\sum_{i\geq 0} c_iU^iD^i, \\
w(\Ind, \Res)&=\sum_{i \geq 0} c_i \Ind^i \Res^i,
\end{align*}
with the same coefficients $c_i$.  For all $i$, the result for $w'=U^iD^i$ follows from Lemma \ref{lem:tensor induced trivial}, and so the first claim holds by linearity.  

After accounting for the copies of the trivial representation which are present in Proposition \ref{prop:eigenvectors}, that Proposition implies that the eigenvalues of $[V(w)_n]\cdot (-)$ are exactly the character values of $V(w)_n$.  By the first claim we conclude that these must agree with the eigenvalues of $w(U,D)_n$.
\end{proof}

\begin{remark}
Any (not-necessarily-differential) tower of groups gives rise to a graded multigraph in the same way that a differential tower of groups gives rise to the Hasse diagram of a differential poset.  One could write down a statement analogous to Proposition \ref{prop:IndRes = UD} in this context, however we are interested in this special case since the combinatorial and algebraic properties of the up and down maps in differential posets are so strong.
\end{remark}

\subsection{The generalized permutation representation}

The representation \\ $\Ind_{\mf{S}_{n-1}}^{\mf{S}_n} \mathbbm{1}$ of the symmetric group $\mf{S}_n$ is easily seen to be isomorphic to the $n$-dimensional permutation representation, where $\mf{S}_n$ acts by permuting coordinates in $\C^n$.  In \cite{G1}, the second author was able to explicitly compute the critical group for this representation, generalizing Example \ref{ex:perm rep of S4} to arbitrary $n$.  Here we extend that result to a broader class of differential towers of groups:

\begin{theorem} \label{thm:gen-perm-rep}
Let $\mf{G}=G_0 \subset G_1 \subset \cdots$ be an $r$-differential tower of groups such that the associated differential poset $P=P(\mf{G})$ satisfies Conjecture \ref{conj:Miller-Reiner} (such as $\mf{G}=A \wr \mf{S}$ with $A$ abelian of order $r$).  Let $V=V(UD)_n=\Ind_{G_{n-1}}^{G_n} \mathbbm{1}_{G_{n-1}}$.  Then
\[
K(V)=\bigoplus_{i=2}^{p_n} \Z/q_i \Z,
\]
where 
\[
q_i=\prod_{\substack{1 \leq j \leq n \\ \Delta p_j \geq i}} rj.
\]
\end{theorem}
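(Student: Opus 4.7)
The plan is to identify $\tilde{C}_V$ with a concrete polynomial in $UD_n$, apply Proposition~\ref{prop:snf} to read off its Smith normal form, and then match the invariant factors against the stated $q_i$.

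First, I would use the two key structural facts about the tower $\mf{G}$. Since $V = V(UD)_n = \Ind_{G_{n-1}}^{G_n}\mathbbm{1}$, Proposition~\ref{prop:IndRes = UD} identifies multiplication by $[V]$ on $R(G_n)$ with the operator $UD_n$ on $\Z^{P_n}$, while Proposition~\ref{prop: size of DTG} gives $\dim V = [G_n : G_{n-1}] = rn$. Hence, under the identification $R(G_n) \cong \Z^{P_n}$,
\[
\tilde{C}_V = rn \cdot I - UD_n.
\]
Theorem~\ref{thm:DP eigenvalues} then shows $\tilde{C}_V$ is semisimple with integer eigenvalue $rj$ of multiplicity $\Delta p_j$, for $j = 0, 1, \ldots, n$.

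Next, I would verify that $\tilde{C}_V + tI$ admits a Smith normal form over $\Z[t]$, so that Proposition~\ref{prop:snf} applies. Writing
\[
\tilde{C}_V + tI = -\bigl(UD_n + (-rn - t)I\bigr),
\]
the affine substitution $s \mapsto -rn - t$ is a ring automorphism of $\Z[t]$, and $-I \in GL_{p_n}(\Z[t])$, so any SNF for $UD_n + sI$ (which exists by the hypothesis that $P$ satisfies Conjecture~\ref{conj:Miller-Reiner}) pulls back to one for $\tilde{C}_V + tI$. Proposition~\ref{prop:snf} then yields the SNF $\diag(s_1, \ldots, s_{p_n})$ of $\tilde{C}_V$ via
\[
s_{p_n + 1 - i} = \prod_{\substack{0 \leq j \leq n \\ \Delta p_j \geq i}} rj.
\]
Finally I would match these factors to the $q_i$. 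For $i = 1$ the index $j = 0$ appears (since $\Delta p_0 = 1$) and contributes a factor of $0$, giving a free $\Z$-summand of $\coker(\tilde{C}_V)$, which Definition-Proposition~\ref{def-prop:critical group} identifies with $\Z \cdot \delta^{(e)}$. For $i \geq 2$ the index $j = 0$ is excluded, and Stanley's monotonicity $\Delta p_{j+1} \geq \Delta p_j$ for $j \geq 1$ from \cite{St} forces $\{1 \leq j \leq n : \Delta p_j \geq i\}$ to equal the tail $\{m_0(i), m_0(i) + 1, \ldots, n\}$, giving
\[
s_{p_n + 1 - i} = \prod_{j = m_0(i)}^{n} rj = r^{n - m_0(i) + 1} \cdot n(n-1) \cdots m_0(i) = q_i.
\]
Stripping off the free summand leaves $K(V) \cong \bigoplus_{i=2}^{p_n} \Z/q_i\Z$.

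The main obstacle I expect is the bookkeeping in the substitution step for $\tilde{C}_V + tI$: one must verify that the pullback along an affine substitution in $\Z[t]$ preserves the Smith normal form property (it does, since such substitutions are ring automorphisms and $-I$ is a unit matrix), so that Proposition~\ref{prop:snf} is legitimately applicable. The rest — the identification of $\tilde{C}_V$ with $rnI - UD_n$, the eigenvalue count from Theorem~\ref{thm:DP eigenvalues}, and Stanley's monotonicity of rank sizes for $j \geq 1$ — are off-the-shelf.
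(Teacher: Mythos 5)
Your proposal is correct and follows essentially the same route as the paper: identify $\tilde{C}_V$ with $rnI - UD_n$ via Propositions \ref{prop:IndRes = UD} and \ref{prop: size of DTG}, read off its Smith normal form from Proposition \ref{prop:snf} together with Theorem \ref{thm:DP eigenvalues}, and rewrite the resulting products in terms of $m_0$. You are in fact somewhat more explicit than the paper on two points it leaves implicit, namely the affine-substitution argument transferring the $\Z[t]$ Smith form from $UD_n + tI$ to $\tilde{C}_V + tI$, and the monotonicity of $j \mapsto \Delta p_j$ (for $j \geq 1$) needed to see that $\{1 \leq j \leq n : \Delta p_j \geq i\}$ is the tail $\{m_0(i), \ldots, n\}$.
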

\begin{proof}
By Proposition \ref{prop: size of DTG}, $\dim(V)=|G_n|/|G_{n-1}|=rn$.  By Proposition \ref{prop:IndRes = UD}, the matrix for the map $\tilde{C}$ of multiplication by $rn[\mathbbm{1}]-[V]$ in $R(G_n)$ in the basis $\{V_{\lambda}|\lambda \in P_n\}$ is equal to the matrix for the map $rnI-UD$ in the basis $\{\lambda|\lambda \in P_n \}$.  Since $P$ satisfies Conjecture \ref{conj:Miller-Reiner} by hypothesis, Proposition \ref{prop:snf} and Theorem \ref{thm:DP eigenvalues} together imply that the Smith normal form of $\tilde{C}$ is $\diag(s_1,...,s_{p_n})$ with
\[
s_{p_n+1-i}=\prod_{\substack{0 \leq j \leq n \\ \Delta p_{n-j} \geq i}}(rn-rj). 
\] 
Re-indexing with $j=n-j$ we get that 
\[
s_{p_n+1-i}=\prod_{\substack{0 \leq j \leq n \\ \Delta p_j \geq i}}rj. 
\]
Letting $q_i=s_{p_n+1-i}$, we see that $q_1=0$ since $\Delta p_0 = p_0-p_{-1}=1$; therefore the critical group is given by the direct sum beginning at $i=2$ as in the statement of the theorem.
\end{proof}

Results of Miller \cite{Miller} and Gaetz and Venkataramana \cite{GV} show that $\Delta p_n \geq 2r$ for $n \geq 2$.  In particular, this implies that many of the factors in the products $q_i$ do in fact appear.

\begin{remark}
In \cite{Miller}, Miller has shown that for any differential poset the largest Smith factor of $UD$ agrees with the form predicted by Conjecture \ref{conj:Miller-Reiner}.  Therefore we can conclude that the largest factor of $K(V)$ is $\Z/q_2 \Z$, without assuming that $P(\mf{G})$ satisfies Conjecture \ref{conj:Miller-Reiner}.
\end{remark}

\subsection{The structure of $K(V(f))$}

In this section we investigate the order and subgroup structure of $K(V(f)_n)$ for general finite sums of balanced words $f$.  Although exact formulas for the critical group, like that given in Theorem \ref{thm:gen-perm-rep} for the case $f(U,D)=UD$ remain elusive in general, the results below considerably restrict the structure of $K(V(f)_n)$.

The following proposition of Stanley characterizes eigenspaces for sums of balanced words in a differential poset:

\begin{prop}[\cite{St}, Proposition 4.12] \label{prop:word-eigenspaces}
Let $P$ be an $r$-differential poset and let $f(U,D)$ be a finite sum of balanced words.  Write
\[
f(U,D)=\sum_{j \geq 0} \beta_j(UD)^j
\]
and define
\[
\alpha_i=\sum_{j \geq 0} \beta_j(ri)^j.
\]
Then the characteristic polynomial of $f(U,D)_n:\Z^{P_n} \to \Z^{P_n}$ is given by
\[
\ch f(U,D)_n = \prod_{j=0}^n (x-\alpha_i)^{\Delta p_{n-i}}.
\]
\end{prop}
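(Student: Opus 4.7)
The plan is to deduce everything from the spectral theorem applied to $UD_n$, given the expression $f(U,D) = \sum_j \beta_j (UD)^j$ as a polynomial in $UD$.

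First I would verify that this writing is legitimate — namely, that any balanced word in $U,D$ acts on each $\Z^{P_n}$ as a $\Z$-polynomial in $UD_n$ with coefficients depending only on the word. The relation (DP2) gives $DU = UD + rI$, so any balanced word $w$ of length $2k$ can be straightened: iteratively replacing each occurrence of $DU$ with $UD + rI$ produces a balanced word of length $2k$ with one fewer $DU$-inversion, plus a balanced word of length $2k-2$. Induction on the pair (length, inversion count) eventually expresses $w$ as a $\Z$-linear combination of operators of the form $U^i D^i$, and a further straightforward induction expresses each $U^i D^i$ as a polynomial in $UD$ (for example, by pushing one $UD$ to the right using the commutation relation). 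Summing over the words making up $f$ then justifies the hypothesis $f(U,D) = \sum_{j \geq 0} \beta_j (UD)^j$.

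Now Theorems \ref{thm:DP eigenvalues} and \ref{thm:DP eigenspaces} together give the eigenspace decomposition $\Z^{P_n}\otimes \Q = \bigoplus_{i=0}^n E_n(ri)$ of $UD_n$, with $\dim E_n(ri) = \Delta p_{n-i}$. Because $f(U,D)_n$ is a polynomial in $UD_n$, it preserves each $E_n(ri)$ and acts there as the scalar $\sum_{j \geq 0} \beta_j (ri)^j = \alpha_i$. Hence $f(U,D)_n$ is semisimple and has eigenvalue $\alpha_i$ with multiplicity $\Delta p_{n-i}$, yielding the characteristic polynomial $\prod_{i=0}^n (x - \alpha_i)^{\Delta p_{n-i}}$ claimed in the proposition.

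The only real obstacle is the reduction to a polynomial in $UD$ performed in step one, but this is a pure bookkeeping exercise in the Heisenberg-type relation $DU-UD = rI$ of (DP2); no additional combinatorial input about $P$ is required beyond the two spectral theorems of Stanley already quoted.
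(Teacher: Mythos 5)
Your argument is correct, and it is essentially the standard (Stanley's) proof: the paper itself states this proposition without proof, citing \cite{St}, and the route there is exactly what you describe --- straighten each balanced word into a polynomial in $UD$ via $DU = UD + rI$, then read off the eigenvalues $\alpha_i$ and multiplicities $\Delta p_{n-i}$ from the spectral decomposition of $UD_n$ given by Theorems \ref{thm:DP eigenvalues} and \ref{thm:DP eigenspaces}. (The same straightening step is also used later in the paper, in the proof of Proposition \ref{prop:IndRes = UD}.)
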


Proposition \ref{prop:word-eigenspaces} allows us to characterize the order and subgroup structure of critical groups $K(V(f)_n)$.  Since it is clear from the definition that $K(V \oplus \mathbbm{1}) = K(V)$ for all representations $V$, we are free to assume in Proposition \ref{prop:word-eigenspaces} that $\beta_0=0$, and we use this convention in what follows.

\begin{prop} \label{prop:dim of V(f)}
Let $f$ be a nonnegative finite sum of balanced words, and
maintain the notation of Proposition \ref{prop:word-eigenspaces}.  Assume further that $P=P(\mf{G})$ for $\mf{G}$ a differential tower of groups.  Then,
\begin{itemize}
\item[a.] $\dim(V(f)_n)=\alpha_n$, and 
\item[b.] $V(f)_n$ is a faithful representation.
\end{itemize} 
\end{prop}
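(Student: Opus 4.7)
The plan is to reduce both parts to statements about the linear operator $f(U,D)_n$ acting on $\Z^{P_n}$, via Proposition \ref{prop:IndRes = UD}. For part (a), under the identification $R(G_n) \cong \Z^{P_n}$ sending $[V_\lambda] \mapsto \lambda$, the character-table column $\delta^{(e)} = \sum_W \dim(W)\cdot [W] = [V_{reg}]$ becomes $\sum_\lambda e(\lambda)\lambda = U^n\hat{0}$, where I use the identity $\dim V_\lambda = e(\lambda)$ established in the proof of Proposition \ref{prop: size of DTG}. Theorem \ref{thm:DP eigenspaces} places $U^n\hat{0}$ in the one-dimensional eigenspace $E_n(rn)$ of $UD$, so $f(U,D)_n$ scales it by $\alpha_n$ (by Proposition \ref{prop:word-eigenspaces}). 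On the representation side, multiplication by $[V(f)_n]$ scales $\delta^{(e)}$ by $\chi_{V(f)_n}(e) = \dim V(f)_n$ (the eigenvalue equation of Proposition \ref{prop:eigenvectors}, applied to $[V(f)_n]\cdot(-)$ rather than to $\tilde{C}_{V(f)_n}$). Equating the two scalars yields $\dim V(f)_n = \alpha_n$.

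For part (b), since $\{\delta^{(g)}\}_g$ is a full orthogonal basis of eigenvectors of $f(U,D)_n$ with eigenvalues $\chi_{V(f)_n}(g)$, the algebraic multiplicity of $\alpha_n$ as an eigenvalue of $f(U,D)_n$ equals the number of conjugacy classes $g$ with $\chi_{V(f)_n}(g) = \alpha_n$. Because $V(f)_n$ is a genuine representation, this count coincides with the number of $G_n$-conjugacy classes contained in $\ker V(f)_n$, so faithfulness of $V(f)_n$ amounts to this multiplicity being exactly $1$. Proposition \ref{prop:word-eigenspaces} expresses the multiplicity as $\sum_{i:\alpha_i=\alpha_n}\Delta p_{n-i}$, of which the $i=n$ term alone contributes $\Delta p_0 = 1$; thus it suffices to show that $\alpha_i < \alpha_n$ strictly for every $i<n$.

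To establish the strict inequality I would switch from the $(UD)^j$ basis of Proposition \ref{prop:word-eigenspaces} to the $U^jD^j$ basis. Iteratively applying $DU = UD + rI$ to push $U$'s past $D$'s rewrites any nonnegative sum of balanced words as $f(U,D) = \sum_{j\geq 1} c_j\, U^jD^j$ with $c_j \in \Z_{\geq 0}$ (the sum starts at $j=1$ because the convention $\beta_0=0$ forces the constant term to vanish). A short induction based on $DU^i = U^iD + irU^{i-1}$ shows that $U^jD^j$ acts on $E_n(ri) = U^i E_{n-i}(0)$ as the scalar $r^j(i)_j$, where $(i)_j = i(i-1)\cdots(i-j+1)$ is the falling factorial. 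Thus $\alpha_i = \sum_{j\geq 1} c_j\, r^j (i)_j$, and
\[
\alpha_n - \alpha_i \;=\; \sum_{j\geq 1} c_j\, r^j \bigl((n)_j-(i)_j\bigr)
\]
for $i<n$. Every summand is nonnegative since $(n)_j$ is weakly increasing in $n$, and every summand with $c_j>0$ and $1\leq j\leq n$ is strictly positive. Provided $V(f)_n$ is a nonzero representation (the only case in which faithfulness is a meaningful question), $\alpha_n = \dim V(f)_n > 0$ forces such a $j$ to exist, yielding $\alpha_n > \alpha_i$ strictly. The main things to verify carefully are the passage to the $U^jD^j$ basis with nonnegative coefficients and the scalar action $r^j(i)_j$ on $E_n(ri)$; both follow by clean inductions using the commutator identity $DU^i = U^iD + irU^{i-1}$, and no deeper input is needed.
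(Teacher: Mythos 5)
Your proof is correct, but it routes both parts differently from the paper, and in part (b) it actually supplies a justification the paper omits. For part (a) the paper argues directly: $\Res$ preserves dimension and $\Ind$ multiplies it by $[G_n:G_{n-1}]=rn$ (Proposition \ref{prop: size of DTG}), so $(\Ind\Res)^j\mathbbm{1}_{G_n}$ has dimension $(rn)^j$ and $\dim V(f)_n=\sum_j\beta_j(rn)^j=\alpha_n$ by linearity; your identification of $\delta^{(e)}$ with $U^n\hat{0}$ spanning $E_n(rn)$ and comparison of the two eigenvalues reaches the same conclusion with more machinery, though it is a nice consistency check between the two sides of Proposition \ref{prop:IndRes = UD}. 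For part (b) the paper simply asserts that $\alpha_i<\alpha_n$ for $i\neq n$ is ``clear from the definition,'' which is not literally immediate: the coefficients $\beta_j$ in the $(UD)^j$ expansion can be negative (e.g.\ $U^2D^2=(UD)^2-r\,UD$), so monotonicity of $\alpha_i=\sum_j\beta_j(ri)^j$ in $i$ requires an argument. Your passage to the normal-ordered form $\sum_{j\geq 1}c_jU^jD^j$ with $c_j\geq 0$, together with the scalar action $r^j(i)_j$ on $E_n(ri)$ obtained from $DU^i=U^iD+irU^{i-1}$, is a correct and genuinely useful way to make this step rigorous, and your reduction of faithfulness to the eigenvalue $\alpha_n$ having multiplicity $\Delta p_0=1$ matches the paper's. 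Your explicit caveat that $V(f)_n$ must be nonzero (forcing some $c_j>0$ with $1\leq j\leq n$) is also a point the paper glosses over.
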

\begin{proof}
We have $f(U,D)=\sum_{j>0} \beta_j (UD)^j$ and $\alpha_n=\sum_{j > 0} \beta_j (rn)^j$.  Part (a) is immediate from the fact that $V((UD)^j_n)$ is obtained by applying $(\Ind \Res)^j$ to the representation $\mathbbm{1}_{G_n}$, so it has the dimension $[G_n:G_{n-1}]^j=(rn)^j$ by Proposition \ref{prop: size of DTG}.  It is clear from the definition that $\alpha_i < \alpha_n$ for $i \neq n$, thus, since the $\alpha_i$ are the character values of $V(f)_n$, and since $\alpha_n$ has multiplicity $\Delta p_0=1$, we see that $V(f)_n$ is faithful.
\end{proof}

\begin{theorem} \label{thm:structure of K(V(f))}  Let $\mf{G}$ be an $r$-differential tower of groups and let $f(U,D)$ be a nonnegative finite sum of balanced words.  Then, using the notation of Proposition \ref{prop:word-eigenspaces}, we have:
\begin{itemize}
\item[a.] The size of the critical group $K(V(f)_n)$ is given by:
\[
|K(V(f)_n)|=\frac{1}{r^n \cdot n!} \prod_{i=0}^{n-1} (\alpha_n - \alpha_i)^{\Delta p_{n-i}}.
\]
\item[b.] For each $i=1,...,n-1$, the critical group $K(V(f)_n)$ has a subgroup isomorphic to $(\Z/(\alpha_n - \alpha_i) \Z)^{\Delta p_{n-i} -1}$.
\end{itemize}
\end{theorem}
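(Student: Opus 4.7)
My plan is to read off the character values of $V(f)_n$ directly from the spectrum of $f(U,D)_n$ and then feed this data into Theorem \ref{thm:repeated}. Proposition \ref{prop:eigenvectors} diagonalizes multiplication by $[V(f)_n]$ on $R(G_n)$ via the columns $\delta^{(g)}$ of the character table, with eigenvalue $\chi_{V(f)_n}(g)$. By Proposition \ref{prop:IndRes = UD}, this multiplication operator is identified with $f(U,D)_n : \Z^{P_n} \to \Z^{P_n}$, and by Proposition \ref{prop:word-eigenspaces} its characteristic polynomial is $\prod_{i=0}^n (x - \alpha_i)^{\Delta p_{n-i}}$. Thus, as $c$ ranges over a set of conjugacy class representatives of $G_n$, the multiset $\{\chi_{V(f)_n}(c)\}$ is precisely $\{\alpha_i \text{ with multiplicity } \Delta p_{n-i} : 0 \leq i \leq n\}$. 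Since $\Delta p_0 = 1$ and $\alpha_n = \dim V(f)_n$ by Proposition \ref{prop:dim of V(f)}(a), the value $\alpha_n$ is attained only on the identity class, and $V(f)_n$ is faithful by Proposition \ref{prop:dim of V(f)}(b), so Theorem \ref{thm:repeated} applies.

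For part (a), I would substitute this character data into Theorem \ref{thm:repeated}(a). The product over non-identity conjugacy classes becomes
\[
\prod_{i=0}^{n-1}(\alpha_n - \alpha_i)^{\Delta p_{n-i}},
\]
and dividing by $|G_n| = r^n \cdot n!$ (Proposition \ref{prop: size of DTG}) gives exactly the claimed formula.

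For part (b), observe first that each $\alpha_i = \sum_j \beta_j (ri)^j$ is an integer combination of integer powers, hence an integer. Fix $i \in \{1,\ldots,n-1\}$; by the identification above, $\alpha_i$ is attained as a character value of $V(f)_n$ on at least $\Delta p_{n-i}$ conjugacy classes (strictly more if some $\alpha_j = \alpha_i$ for $j \neq i$). Applying Theorem \ref{thm:repeated}(b) with $a = \alpha_i$ and $m \geq \Delta p_{n-i}$ yields a subgroup of $K(V(f)_n)$ isomorphic to $(\Z/(\alpha_n - \alpha_i)\Z)^{m-1}$, which contains the desired $(\Z/(\alpha_n - \alpha_i)\Z)^{\Delta p_{n-i} - 1}$. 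The proof is essentially a bookkeeping exercise stitching together existing results; the only minor subtlety is that the $\alpha_i$ need not be distinct in $i$, but this only strengthens the conclusion of part (b), so no serious obstacle is foreseen.
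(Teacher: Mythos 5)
Your proposal is correct and follows essentially the same route as the paper: the paper's own proof is a one-line citation of Theorem \ref{thm:repeated} combined with Propositions \ref{prop:IndRes = UD}, \ref{prop:dim of V(f)}, and \ref{prop: size of DTG}, which is precisely the bookkeeping you carry out. Your extra care about possible coincidences among the $\alpha_i$ is a nice touch but, as you note, only strengthens part (b).
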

\begin{proof}
This follows from applying Theorem \ref{thm:repeated} with the information about character values given by Proposition \ref{prop:dim of V(f)} and Proposition \ref{prop:IndRes = UD} and the size of $G_n$ as calculated in Proposition \ref{prop: size of DTG}.
\end{proof}

\section{Enumeration of factors in critical groups} \label{sec:factors}

In what follows, when a differential tower of groups and a rank $n$ are understood, we let $\ones(w)$ denote the number of ones in the Smith normal form of $\tilde{C}_{V(w)_n}$, where $w$ is a balanced word.  Then the number of nontrivial factors in the critical group $K(V(w)_n)$ is $p_n-1-\ones(w)$, since $\tilde{C}$ is a $p_n \times p_n$-matrix and there is always a unique zero in the Smith form, by Definition-Proposition \ref{def-prop:critical group}.

The elements of $Y^r$ of rank $n$ are indexed by $r$-tuples $\boldsymbol{\lambda}=(\lambda^{(1)},...,\lambda^{(r)})$ of partitions such that $\sum_i |\lambda^{(i)}|=n$.  Define the \textit{$r$-lexicographic order} on $Y^r$ by first applying lexicographic order on $\lambda^{(1)}$, and then on $\lambda^{(2)}$, and so on; if $|\lambda|>|\mu|$ then we use the convention that $\lambda$ is greater than $\mu$ in the lexicographic order.  We write $\boldsymbol{\lambda} \geq \boldsymbol{\mu}$ to denote this order. 

Our main tool for proving results in this section will be the the characterization of Smith normal form in terms of minors of the matrix, as given in Proposition \ref{prop:SNF from minors}.  If $M$ is a matrix with rows and columns indexed by sets $S,T$ respectively, we let $M_{S',T'}$ denote the submatrix indexed by rows $S' \subset S$ and columns $T' \subset T$.

\begin{theorem} \label{thm:ones of w}
Let $w$ be any balanced word of length $2k \leq 2n$.  Consider the $r$-differential tower of groups $A \wr \mf{S}$, where $A$ is an abelian group of order $r \geq 2$, and the corresponding differential poset $Y^r$.  Then 
\[
\ones(w)=|(Y^r)_{n-k}|=\sum_{\substack{i_1+\cdots+i_r=n-k \\ i_1,...,i_r \geq 0}} \prod_{j=1}^r p(i_j).
\]
In particular, $\ones(w)$ depends only on $r$ and $n-k$, and not on the particular $w$ chosen.
\end{theorem}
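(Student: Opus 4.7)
The plan is to use Proposition \ref{prop:SNF from minors} throughout: $\ones(w)$ equals the largest $j$ such that the gcd of all $j \times j$ minors of $\tilde{C}_{V(w)_n}$ is $1$. As a preliminary observation, applying $DU = UD + rI$ repeatedly reduces $w(U,D)$ to a polynomial $w_{\mathrm{poly}}(UD)$ of degree $k$ in $UD$, so $\tilde{C}_{V(w)_n} = P_w(UD)$ with $P_w(x) = \alpha_n - w_{\mathrm{poly}}(x)$. Since $\alpha_n = w_{\mathrm{poly}}(rn)$ by Proposition \ref{prop:IndRes = UD}, we always have $P_w(rn) = 0$, yielding the factorization $\tilde{C}_{V(w)_n} = (rnI - UD)\, Q_w(UD)$ for an integer polynomial $Q_w$ of degree $k-1$.

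For the lower bound $\ones(w) \geq p_{n-k}$, I would exhibit a $p_{n-k} \times p_{n-k}$ submatrix of $\tilde{C}_{V(w)_n}$ with determinant $\pm 1$. The natural candidate is indexed by the image of the embedding $\phi: (Y^r)_{n-k} \hookrightarrow (Y^r)_n$ sending $\boldsymbol\mu \mapsto \boldsymbol\mu^+$, where $\boldsymbol\mu^+$ extends the first row of $\mu^{(1)}$ by $k$ boxes. Listing rows and columns in $r$-lexicographic order, combinatorial path-counting in the Hasse diagram of $Y^r$ should show that the $(\phi(\boldsymbol\mu), \phi(\boldsymbol\nu))$ entries vanish above the $r$-lex diagonal: a balanced word of length $2k$ produces only paths of length $2k$ in the Hasse diagram, and the ``extra $k$ boxes'' forced by $\phi$ create obstructions between distant images. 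Diagonal entries, after elementary row and column reductions, become $\pm 1$.

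For the upper bound $\ones(w) \leq p_{n-k}$, I would first treat $w = U^k D^k$: here the character values $\alpha_i = r^k i^{\underline{k}}$ vanish for all $i = 0, 1, \ldots, k-1$, so the value $0$ is achieved on $\sum_{i=0}^{k-1} \Delta p_{n-i} = p_n - p_{n-k}$ conjugacy classes of $G_n$. Theorem \ref{thm:repeated}(b) then embeds $(\Z/\alpha_n\Z)^{p_n - p_{n-k} - 1}$ in $K(V(U^k D^k)_n)$, forcing at least $p_n - p_{n-k} - 1$ nontrivial invariant factors and hence $\ones(U^k D^k) \leq p_{n-k}$. To propagate this bound to arbitrary $w$, I would establish word-invariance: the adjacent swap $X \cdot DU \cdot Y \leftrightarrow X \cdot UD \cdot Y$ produces the identity $\tilde{C}_{V(w)_n} = \tilde{C}_{V(w')_n} + r\, \tilde{C}_{V(XY)_n}$ for the length-$2(k-1)$ balanced word $XY$, which (combined with the commutativity of these matrices and Shah's Theorem \ref{thm: MR for Y} identifying the $\Z[UD]$-module structure of $\Z^{P_n}$) forces $\ones$ to be constant across balanced words of length $2k$. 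The main obstacle is the lower bound's submatrix determinant, which requires intricate path-counting in $Y^r$; a secondary difficulty is making the word-invariance argument rigorous, since Smith forms are not additive in general and preserving the count of ones must be deduced from the fine polynomial structure in $UD$.
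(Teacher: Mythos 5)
Your overall strategy --- certify $\ones(w)\ge p_{n-k}$ by exhibiting a $p_{n-k}\times p_{n-k}$ minor equal to $\pm1$, and $\ones(w)\le p_{n-k}$ from the multiplicity of the character value $0$ --- is the paper's strategy, and your upper bound for $w=U^kD^k$ is essentially identical to the paper's (Proposition \ref{prop:ones of UkDk}); the preliminary factorization through $(rnI-UD)$ is correct but never used. However, both steps you flag as ``obstacles'' are genuine gaps, and in the first case the mechanism you propose is actually false. Writing $w(U,D)=\sum_i c_iU^iD^i$, each $U^iD^i$ is a symmetric matrix in the standard basis (since $U^{T}=D$), so $M_w$ and $\tilde{C}_{V(w)_n}$ are symmetric and \emph{every principal submatrix is symmetric}; a symmetric triangular matrix is diagonal, and the principal submatrix on the image of your $\phi$ is visibly not diagonal (already for $r=1$, $n=4$, $k=1$ the entry in position $((4),(3,1))$ of $UD$ is $1$). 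So the claimed vanishing above the $r$-lex diagonal cannot hold, and ``diagonal entries become $\pm1$ after reductions'' is precisely the content you would need to prove. The paper avoids this by taking \emph{different} row and column index sets: rows $S_k=\{\bs{\lambda}:\lambda^{(r)}\text{ has at least }k\text{ ones}\}$ and columns $T_k=\{\bs{\lambda}:(\lambda^{(1)})'\text{ has at least }k\text{ ones}\}$ (the latter is your $\phi$-image). For these, the $r$-lexicographically greatest $\bs{\mu}$ reachable from $\bs{\lambda}\in S_k$ by removing and re-adding $k$ boxes is obtained in exactly one way (strip $k$ ones from $\lambda^{(r)}$, append them to the top row of $\lambda^{(1)}$), so $M^{(k)}_{S_k,T_k}$ is genuinely lower unitriangular; and the nesting $S_i\subset S_{i-1}$, $T_i\subset T_{i-1}$ together with the position of the successive diagonals shows the same $S_k\times T_k$ submatrix of $M_w=\sum_i c_iM^{(i)}$ remains lower unitriangular for an arbitrary balanced $w$ --- a point your sketch also leaves unaddressed.

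For the upper bound for general $w$, your swap identity $\tilde{C}_{V(w)_n}=\tilde{C}_{V(w')_n}+r\,\tilde{C}_{V(XY)_n}$ is correct, but the inductive transfer of $\ones$ across it is exactly what cannot be done (Smith forms are not additive, as you note), and ``commutativity plus Shah'' does not supply the missing step. The paper bypasses word-invariance entirely: in the expansion $w(U,D)=U^kD^k+\sum_{i<k}c_iU^iD^i$ every $c_i$ is divisible by $r$, as is $\dim V(w)_n$; choosing unimodular $P,Q$ with $PM^{(k)}Q=\diag(1,\dots,1,0,\dots,0)$ (rank $p_{n-k}$ plus the unit minor from the $U^kD^k$ case), one gets $P\tilde{C}_{V(w)_n}Q\equiv -PM^{(k)}Q \pmod r$, so all but $p_{n-k}$ of its columns are divisible by $r$, every $(p_{n-k}+1)\times(p_{n-k}+1)$ minor is divisible by $r\ge 2$, and Proposition \ref{prop:SNF from minors} forces $\ones(w)\le p_{n-k}$. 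This is also exactly where the hypothesis $r\ge 2$ enters (and Example \ref{ex:r=1 doesn't work} shows it is needed), whereas your proposed word-invariance argument makes no use of $r\ge 2$ and so would ``prove'' a false statement for $r=1$.
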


We first prove the case $w=U^kD^k$; note that, unlike in Theorem \ref{thm:ones of w}, we do not require $r \geq 2$ in Proposition \ref{prop:ones of UkDk}.

\begin{prop} \label{prop:ones of UkDk}
Let $w=U^kD^k$ with $k \leq n$.  Consider the $r$-differential tower of groups $A \wr \mf{S}$, where $A$ is an abelian group of order $r$, and the corresponding differential poset $P=Y^r$.  Then,
\[
\ones(w)=|(Y^r)_{n-k}|.
\]
\end{prop}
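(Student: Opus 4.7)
The plan is to invoke Proposition \ref{prop:SNF from minors} directly, controlling the greatest common divisors of the $p_{n-k}$-minors and the $(p_{n-k}+1)$-minors of $\tilde{C} := \tilde{C}_{V(U^kD^k)_n}$. By Propositions \ref{prop:IndRes = UD} and \ref{prop:dim of V(f)}, $\tilde{C}$ is identified with the matrix $\alpha_n I - U^kD^k$ on $\Z^{P_n}$, where $\alpha_n := r^k n(n-1)\cdots(n-k+1)$; the commutation $DU = UD + rI$ implies (by induction on $k$) that $U^kD^k = UD(UD-rI)\cdots(UD-(k-1)rI)$, and in particular $U^kD^k$ factors through $\Z^{P_{n-k}}$ as the composition $\Z^{P_n} \xrightarrow{D^k} \Z^{P_{n-k}} \xrightarrow{U^k} \Z^{P_n}$.

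The upper bound $\ones(U^kD^k) \le p_{n-k}$ follows by reduction mod $\alpha_n$: since $U^kD^k$ factors through a rank-$p_{n-k}$ lattice, every $(p_{n-k}+1) \times (p_{n-k}+1)$ minor of $-U^kD^k$ vanishes, so every corresponding minor of $\tilde{C}$ is divisible by $\alpha_n > 1$; Proposition \ref{prop:SNF from minors} then gives $s_{p_{n-k}+1} > 1$.

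For the lower bound $\ones(U^kD^k) \ge p_{n-k}$, I would show $\rank_{\mathbb{F}_p}(\tilde{C}) \ge p_{n-k}$ for every prime $p$. When $p \mid \alpha_n$, Theorem \ref{thm: MR for Y}(b) gives that $D^k$ is surjective with torsion-free cokernel, and its transpose $U^k$ is injective with torsion-free cokernel; both properties persist modulo $p$, so $\tilde{C} \equiv -U^kD^k \pmod p$ has rank exactly $p_{n-k}$. When $p \nmid \alpha_n$ (which forces $p \nmid r$), the kernel of $\tilde{C} \otimes \mathbb{F}_p$ is the $\alpha_n$-eigenspace of $U^kD^k \otimes \mathbb{F}_p$, of dimension at most the $\overline{\mathbb{F}_p}$-algebraic multiplicity $\sum_{j \in J_p} \Delta p_{n-j}$, with $J_p := \{j \in \{k,\ldots,n\} : r^k j(j-1)\cdots(j-k+1) \equiv \alpha_n \pmod p\}$. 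The defining congruence is a degree-$k$ polynomial equation in $j$ over $\mathbb{F}_p$
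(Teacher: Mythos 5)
Your upper bound $\ones(U^kD^k)\le p_{n-k}$ is correct, and it takes a genuinely different (and arguably cleaner) route than the paper's: since $U^kD^k$ factors through $\Z^{P_{n-k}}$ it has rank at most $p_{n-k}$, so every $(p_{n-k}+1)\times(p_{n-k}+1)$ minor of $\tilde{C}=\alpha_nI-U^kD^k$ is divisible by $\alpha_n=r^k\,n!/(n-k)!>1$, and Proposition \ref{prop:SNF from minors} forces $s_{p_{n-k}+1}\neq 1$; the paper instead deduces the same inequality from the repeated character value $0$ via Theorem \ref{thm:repeated}(b), which additionally requires the surjectivity of $D$ to pin down the kernel dimension. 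Your lower bound in the case $p\mid\alpha_n$ is also sound: surjectivity of $D^k$ over $\Z$ (Theorem \ref{thm: MR for Y}(b)) persists modulo $p$, and $U^k$ stays injective modulo $p$ because its cokernel is free, so $\tilde{C}\otimes\mathbb{F}_p=-U^kD^k\otimes\mathbb{F}_p$ has rank exactly $p_{n-k}$.

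The gap is in the case $p\nmid\alpha_n$, where the argument is both unfinished and, as set up, insufficient. Bounding $\dim\ker(\tilde{C}\otimes\mathbb{F}_p)$ by the algebraic multiplicity of $\alpha_n\bmod p$ fails because distinct integer eigenvalues $r^kj(j-1)\cdots(j-k+1)$ collide modulo $p$: your set $J_p$ consists of \emph{integers} $j\in\{k,\dots,n\}$ satisfying a congruence, not of roots in $\mathbb{F}_p$, so $|J_p|$ is not bounded by the degree $k$, and $\sum_{j\in J_p}\Delta p_{n-j}$ can exceed $p_n-p_{n-k}$. Concretely, take $r=1$ (permitted in this proposition), $k=1$, $n=5$, $p=2$: the eigenvalues $5,3,1$ of $UD_5$ all reduce to $\alpha_5=5\equiv 1\pmod 2$, the algebraic multiplicity is $\Delta p(0)+\Delta p(2)+\Delta p(4)=1+1+2=4$, and your bound gives only $\rank_{\mathbb{F}_2}(\tilde{C})\ge p(5)-4=3$, whereas you need $\ge p(4)=5$. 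The geometric multiplicity modulo $p$ really is much smaller than the algebraic one, and proving that is where the content of the lower bound lies. The paper sidesteps this entirely with a direct combinatorial construction: ordering the basis of $\Z^{(Y^r)_n}$ $r$-lexicographically, it exhibits a $p_{n-k}\times p_{n-k}$ lower unitriangular submatrix of the matrix of $U^kD^k$ (rows indexed by the $\bs{\lambda}$ for which $\lambda^{(r)}$ has at least $k$ ones, columns by the $\bs{\lambda}$ for which $(\lambda^{(1)})'$ has at least $k$ ones), which produces a $\pm1$ minor of size $p_{n-k}$ in $\tilde{C}$ and hence $\ones(w)\ge p_{n-k}$ by Proposition \ref{prop:SNF from minors}. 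To close your case you would need something of comparable strength---an explicit unimodular $p_{n-k}$-minor, or control of the geometric (not merely algebraic) multiplicities modulo every prime.
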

\begin{proof}
Consider the matrix $M=M^{(k)}$ for $U^kD^k : \Z^{P_n} \to \Z^{P_n}$ in the standard basis, ordering the rows and columns from least to greatest in the $r$-lexicographic order, and note that $M$ is symmetric.  We say a partition $\lambda$ has $\ell$ ones if $\lambda_i=1$ for $\ell$ values of $i$. Define subsets of the rows and columns, respectively:
\begin{align*}
S_k &= \{\bs{\lambda} \in P_n | \lambda^{(r)} \text{ has at least $k$ ones}\}, \\
T_k &= \{\bs{\lambda} \in P_n | (\lambda^{(1)})' \text{ has at least $k$ ones}\}. 
\end{align*}
Then we claim that $M_{S_k,T_k}$ is lower unitriangular (see the example in Figure \ref{fig:unitriangular}).  This follows since for each $\bs{\lambda} \in S_k$, the $r$-lexicographically greatest $\bs{\mu}$ which can be obtained from $\bs{\lambda}$ by removing and then adding $k$ boxes from the tuple of Young diagrams can be reached in only one way.  Namely, $k$ of the ones from $\lambda^{(r)}$ are removed, and all of these boxes are added to the largest part of $\lambda^{(1)}$.  The resulting $\bs{\mu}$ is clearly an element of $T_k$.  Now, easy bijections show that $|S_k|=|T_k|=p_{n-k}$.  Thus $M^{(k)}$ and $\tilde{C}_{V(w)_n}$ have a $p_{n-k} \times p_{n-k}$ minor equal to $\pm 1$.  This forces $\ones(w) \geq p_{n-k}$.

\begin{figure} 
\[
\begin{pmatrix}
1 & 2 & 1 & 2 & 2 & 1 & \bs{1} & 0 & 0 & 0 \\
2 & 4 & 2 & 4  & 4 & 2 & 2 & 0 & 0 & 0 \\
1 & 2 & 1 & 2 & 2 & 1 & 1 & 0 & 0 & 0 \\
2 & 4 & 2 & 5 & 5 & 4 & 4 & 1 & 2 & \bs{1} \\
2 & 4 & 2 & 5 & 5 & 4 & 4 & 1 & 2 & 1 \\
1 & 2 & 1 & 4 & 4 & 5 & 5 & 2 & 4 & 2 \\
1 & 2 & 1 & 4 & 4 & 5 & 5 & 2 & 4 & 2 \\
0 & 0 & 0 & 1 & 1 & 2 & 2 & 1 & 2 & 1 \\
0 & 0 & 0 & 2 & 2 & 4 & 4 & 2 & 4 & 2 \\
0 & 0 & 0 & 1 & 1 & 2 & 2 & 1 & 2 & 1 \\
\end{pmatrix} \hspace{0.4in}
\begin{pmatrix}
1 & 1 & 0 & \bs{1} & 0 & 0 & 0 & 0 & 0 & 0 \\
1 & 2 & 1 & 1 & \bs{1} & 0 & 0 & 0 & 0 & 0\\
0 & 1 & 1 & 0 & 1 & 0 & 0 & 0 & 0 & 0 \\
1 & 1 & 0 & 2 & 1 & 1 & \bs{1} & 0 & 0 & 0 \\
0 & 1 & 1 & 1 & 2 & 1 & 1 & 0 & 0 & 0 \\
0 & 0 & 0 & 1 & 1 & 2 & 1 & 1 & \bs{1} & 0 \\
0 & 0 & 0 & 1 & 1 & 1 & 2 & 0 & 1 & \bs{1} \\
0 & 0 & 0 & 0 & 0 & 1 & 0 & 1 & 1 & 0 \\
0 & 0 & 0 & 0 & 0 & 1 & 1 & 1 & 2 & 1 \\
0 & 0 & 0 & 0 & 0 & 0 & 1 & 0 & 1 & 1 \\
\end{pmatrix}
\]
\caption{The matrices $M^{(2)}$ and $M^{(1)}$, in the case $r=2, n=3$ with $w=UDUD=U^2D^2+2UD$.  The diagonal entries of the unitriangular submatrices are shown in bold.}
\label{fig:unitriangular}  
\end{figure}

Now, the map $U^kD^k$ factors through the $(n-k)$-th rank:
\[
\Z^{P_n} \xrightarrow{D^k} \Z^{P_{n-k}} \xrightarrow{U^k} \Z^{P_n}.
\]
Since we know $D$ is surjective for $Y^r$ and $U$ is injective in any differential poset, we see that in fact $\dim \: \ker(U^kD^k) = p_n-p_{n-k}$.  Thus $V(U^kD^k)_n$ has the repeated character value 0 with multiplicity $p_n-p_{n-k}$, and thus a subgroup $(\Z/d \Z)^{p_n-p_{n-k}-1}$ where $d>1$ is the dimension of $V(U^kD^k)_n$ by Theorem \ref{thm:repeated}.  Therefore 
\[
\ones(U^kD^k) \leq (p_n-1)-(p_n-p_{n-k}-1) = p_{n-k}.
\]
\end{proof}

\begin{example} \label{ex:ones of UkDk}
Let $r=1$ in Proposition \ref{prop:ones of UkDk}.  Then $V(U^kD^k)_n= \Ind_{\mf{S}_{n-k}}^{\mf{S}_n} \mathbbm{1}$ is the representation which corresponds to the complete homogeneous symmetric function $h_{(n-k,1^k)}$ under the standard characteristic map (see \cite{M}, Chapter 1).  The proposition implies that the critical group $K(V(U^kD^k)_n)$ has $p(n)-p(n-k)-1$ nontrivial factors.

\textit{Note}: When $r>1$, Proposition \ref{prop:ones of UkDk} still applies, however $V(U^kD^k)_n$ no longer corresponds to the wreath-product analog of $h_{(n-k,1^k)}$ (see \cite{M}, Chapter 1, Appendix B).  That representation is obtained by inducing from $\mf{S}_{n-k} \times A^k$, rather than from $\mf{S}_{n-k}$.
\end{example}

We now return to the proof of the main Theorem.

\begin{proof}[Proof of Theorem \ref{thm:ones of w}.] 
We can write \[
w(U,D)=U^kD^k+c_{k-1}U^{k-1}D^{k-1}+\cdots + c_1UD+c_0I,
\] 
using the relation $DU-UD=rI$; clearly all coefficients $c_i$ are divisible by $r$.  Let $M_w$ be the matrix for $w(U,D)$ using our standard ordered basis, and let $M^{(i)}$ be the matrix for $U^iD^i$, so $M_w=\sum c_i M^{(i)}$.  Using the notation from the proof of Proposition \ref{prop:ones of UkDk}, it is clear that $S_i \subset S_{i-1}$ and $T_i \subset T_{i-1}$ for all $i$, and that the diagonal of $M^{(i)}_{S_i, T_i}$ is above that of $M^{(i-1)}_{S_{i-1},T_{i-1}}$ for all $i$.  Therefore $M_w$ still contains a $p_{n-k} \times p_{n-k}$ lower unitriangular submatrix corresponding to rows and columns $S_k, T_k$.  Since this gives a $p_{n-k} \times p_{n-k}$ minor equal to 1, we know that $\ones(w) \geq p_{n-k}$.

For the other inequality, let $P,Q$ be invertible integer matrices which put $M^{(k)}$ in Smith form: $PM^{(k)}Q=S$.  By Proposition \ref{prop:ones of UkDk}, all but $p_{n-k}$ of the columns of $PM^{(k)}Q$ are divisible by $r$.  Therefore, since the $c_i$ are divisible by $r$, all but $p_{n-k}$ of the columns of $PM_wQ$ are divisible by $r$.  The dimension of $V(w)_n$ is divisible by $r$ as well, thus at most $p_{n-k}$ of the columns of $P\tilde{C}_V Q$ are not divisible by $r$.  Then any minor of a $m \times m$ submatrix with $m > p_{n-k}$ must be divisible by $r$, and so $\ones(w) \leq p_{n-k}$ by Proposition \ref{prop:SNF from minors}.
\end{proof}

\begin{example} \label{ex:r=1 doesn't work}
This example shows that the hypothesis $r \geq 2$ in Theorem \ref{thm:ones of w} is necessary.  Let $w=(UD)^2$, then for $n=7$ one can calculate that $\ones(w)=9 \neq p(7-2)=7$.
\end{example}

We can still give some upper and lower bounds in the $r=1$ case.  For a balanced word $w$ of length $2k$, write
\begin{equation} \label{eq:ci def}
w(U,D)=\sum_{i=0}^k c_i U^iD^i.
\end{equation}
Then define $\ell(w)=\min \{ i | c_i \neq 0 \}$; clearly $0 \leq \ell(w) \leq k$, with equality on the right if and only if $w=U^kD^k$.

\begin{prop}\label{prop:r=1 bounds}
Let $w$ be a balanced word of length $2k \leq 2n$.  Then, working in the tower $\mf{S}$ of symmetric groups, we have
\begin{itemize}
\item[a.] $p(n-k) \leq \ones(w) \leq p(n-\ell(w))$, and
\item[b.] $\ones(w)=p(n-k)$ if $gcd(c_1,...,c_k,\dim(V(w)))>1$, where the $c_i$ are defined as in Equation \ref{eq:ci def}.
\end{itemize}
\end{prop}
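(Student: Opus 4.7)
The plan is to treat the two bounds of part (a) with separate tools, and then to derive part (b) from the lower bound of part (a) via a modular reduction, mirroring the structure of the proof of Theorem \ref{thm:ones of w}.

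For the lower bound $\ones(w) \geq p(n-k)$ in (a), I would adapt the unitriangular submatrix argument from Theorem \ref{thm:ones of w}. Using the subsets $S_k = \{\lambda \in Y_n : \lambda \text{ has at least } k \text{ ones}\}$ and $T_k = \{\lambda \in Y_n : \lambda' \text{ has at least } k \text{ ones}\}$, each of size $p(n-k)$ by Proposition \ref{prop:ones of UkDk}, the leading-term contribution $-U^k D^k$ provides a $\pm 1$ minor of $\tilde C_{V(w)_n}$ on rows $S_k$ and columns $T_k$; the lower-order $U^iD^i$ contributions for $i<k$ lie strictly below the relevant diagonal of the submatrix and do not disturb its unitriangular structure.

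For the upper bound $\ones(w) \leq p(n - \ell(w))$, I would exploit the kernel of $w(U,D)_n$. In the $r=1$ setting, a direct computation with the commutation relation $DU-UD=I$ shows $U^iD^i$ acts as $j(j-1)\cdots(j-i+1)$ on the $UD$-eigenspace $E_n(j)$, and since every nonzero $c_i$ has $i \geq \ell(w)$, the operator $w(U,D)_n$ annihilates $E_n(j)$ whenever $j < \ell(w)$. By Theorem \ref{thm:DP eigenvalues} and Theorem \ref{thm:DP eigenspaces}, $\dim E_n(j) = \Delta p_{n-j}$, so summing gives $\dim\ker w(U,D)_n \geq \sum_{j=0}^{\ell(w)-1}\Delta p_{n-j} = p(n) - p(n-\ell(w))$. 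By Proposition \ref{prop:IndRes = UD} the character values of $V(w)_n$ are precisely the eigenvalues of $w(U,D)_n$, so $0$ is a character value with multiplicity $m \geq p(n) - p(n-\ell(w))$. Theorem \ref{thm:repeated}(b) then embeds a copy of $(\Z/\dim V(w)_n \Z)^{m-1}$ into $K(V(w)_n)$. Since $K(V(w)_n)$ has at most $p(n)-1$ invariant factors in its Smith form, this bounds the number of unit factors above by $p(n-\ell(w))$.

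For part (b), set $d = \gcd(c_0, c_1, \ldots, c_{k-1}, \dim V(w)_n) > 1$ (the leading coefficient $c_k=1$ being irrelevant to the gcd). Writing $\tilde C_{V(w)_n} = (\dim V - c_0) I - U^k D^k - \sum_{i=1}^{k-1} c_i U^i D^i$, the hypothesis gives $d \mid (\dim V - c_0)$ and $d \mid c_i$ for each $1 \leq i \leq k-1$, so $\tilde C_{V(w)_n} \equiv -U^k D^k \pmod d$. By Proposition \ref{prop:ones of UkDk}, choose $P, Q \in GL_{p(n)}(\Z)$ with $P(U^k D^k)Q = \diag(\underbrace{1,\ldots,1}_{p(n-k)}, 0, \ldots, 0)$. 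Then the final $p(n)-p(n-k)$ columns of $P \tilde C_{V(w)_n} Q$ are divisible by $d$, so any $m \times m$ minor with $m > p(n-k)$ involves at least one such column and is divisible by $d$. Proposition \ref{prop:SNF from minors} then forces $s_1 s_2 \cdots s_m$ to be divisible by $d > 1$, ruling out $s_{p(n-k)+1} = 1$ and yielding $\ones(w) \leq p(n-k)$; combined with (a) this gives equality. The principal obstacle is ensuring the unitriangular submatrix argument for the lower bound behaves correctly in the $r=1$ case, since the mod-$r$ divisibility shortcut used in Theorem \ref{thm:ones of w} is no longer available.
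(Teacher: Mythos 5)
Your proof is correct and follows the paper's own argument in all three steps: the unitriangular submatrix on $S_k,T_k$ for the lower bound, a kernel bound for the upper bound, and the mod-$d$ column-divisibility argument for part (b). For the upper bound the paper simply observes that $\ker(D^{\ell(w)})\subseteq \ker(w(U,D))$ and applies rank--nullity; this is literally the same subspace $\bigoplus_{j<\ell(w)}E_n(j)$ that you identify via the falling-factorial computation $U^iD^i|_{E_n(j)}=j(j-1)\cdots(j-i+1)$, so your route is a (correct) slightly longer justification of the identical bound. One point worth flagging: you rightly note that $c_k=1$ for every balanced word of length $2k$, so the hypothesis $\gcd(c_1,\ldots,c_k,\dim(V(w)))>1$ as literally printed is never satisfied; your substitute $\gcd(c_0,c_1,\ldots,c_{k-1},\dim V(w)_n)$ (which, under the paper's standing convention that the constant term vanishes, is $\gcd(c_1,\ldots,c_{k-1},\dim V(w)_n)$) is exactly the condition that the paper's own argument for Theorem \ref{thm:ones of w} requires, so your version of part (b) is the intended, non-vacuous statement.
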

\begin{proof}
The argument for the lower bound $p(n-k) \leq \ones(w)$ in the proof of Theorem \ref{thm:ones of w} still holds in the $r=1$ case.  For the upper bound, note that 
\[\ker(w(U,D)) \supset \ker(D^{\ell(w)}:\Z^{Y_n} \to \Z^{Y_{n-\ell(w)}}).
\] 
Thus $\dim \: \ker(w(U,D)) \geq p(n)-p(n-\ell(w))$.  By Theorem \ref{thm:repeated}, this gives the upper bound, proving part (a).

For part (b), notice that if $gcd(c_1,...,c_k,\dim(V(w)))=s>1$, then the argument for the upper bound in the proof of Theorem \ref{thm:ones of w} still applies.
\end{proof}

\begin{example}
Continuing Example \ref{ex:r=1 doesn't work}, we see that 
\[
(UD)^2=UDUD=U(UD+rI)D=U^2D^2+rUD,
\]
and so $\ell((UD)^2)=1$.  Then for $n=7$, Proposition \ref{prop:r=1 bounds} gives that 
\[
7=p(5) \leq \ones(w) \leq p(6) = 11.
\]
In fact we have $\ones(w)=9$, so that we cannot hope for either bound in Proposition \ref{prop:r=1 bounds} to be an equality in general.
\end{example}

\subsection{Smallest factors}

In this section, we give a conjecture on the size and multiplicity of the smallest nontrivial factor in critical groups $K(V(U^kD^k)_n)$.

\begin{conjecture} \label{conj:final-conj}
Working in the differential tower of groups $A \wr \mf{S}$ with $A$ abelian of order $r$, when $k \leq n$ the critical group $K(V(U^kD^k)_n)=K(\Ind_{A \wr \mf{S}_{n-k}}^{A \wr \mf{S}_n} \mathbbm{1})$ is given, as a list of elementary divisors, by:
\[
\left(1^{p_{n-k}}, \left(r^{k} \frac{n!}{(n-k)!}\right)^{p_n-2p_{n-k}+p_{n-2k}},r^{k}\frac{n!}{(n-k)!}e_i \right),
\]
where the exponents involving rank sizes denote multiplicities, and where $e_i$ ranges over the non-unit elementary divisors in the critical group $K(V(D^kU^k)_{n-k})$. 
\end{conjecture}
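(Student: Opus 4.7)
The plan is to derive the conjectured structure via a short exact sequence that relates $K(V(U^kD^k)_n)$ to the smaller critical group $K(V(D^kU^k)_{n-k})$.  First, I would note that $V' := V(D^kU^k)_{n-k} = \Res^k\Ind^k \mathbbm{1}_{G_{n-k}}$ is a faithful representation of $G_{n-k}$ of dimension $d := r^k n!/(n-k)!$, matching $\dim V$.  The key structural observation is that on $R(G_n)$ the matrix $\tilde{C}_V = dI - U^kD^k$ preserves $\ker D^k$, acting there as multiplication by $d$ (since $U^kD^k$ vanishes on $\ker D^k$), while the induced map on $R(G_n)/\ker D^k \cong R(G_{n-k})$ is precisely $\tilde{C}_{V'} = dI - D^kU^k$, as a direct calculation $D^k\tilde{C}_V = \tilde{C}_{V'}D^k$ confirms.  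This uses Shah's Theorem \ref{thm: MR for Y}(b), which makes the sequence $0 \to \ker D^k \to R(G_n) \xrightarrow{D^k} R(G_{n-k}) \to 0$ exact over $\Z$.

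Applying the snake lemma to the resulting commutative diagram, and using the identity $D^k[V_{reg,G_n}]=d[V_{reg,G_{n-k}}]$ coming from the index formula for restricted regular representations, yields
\[
0 \to \Z \xrightarrow{\cdot d} \Z \to (\Z/d\Z)^{p_n - p_{n-k}} \to \coker(\tilde{C}_V) \to \coker(\tilde{C}_{V'}) \to 0.
\]
Quotienting both cokernels by their $\Z$-summands (given by the splitting via $\dim$, which $\Res^k$ respects) this collapses to
\[
0 \to (\Z/d\Z)^{p_n - p_{n-k} - 1} \to K(V) \xrightarrow{\bar{\Res}^k} K(V') \to 0.
\]
At this point the order already matches the conjecture: $|K(V)| = d^{p_n - p_{n-k} - 1}\cdot|K(V')|$, and Theorem \ref{thm:ones of w} applied to the balanced word $D^kU^k$ at rank $n-k$ gives exactly $p_{n-k} - 1 - p_{n-2k}$ non-unit elementary divisors in $K(V')$, so the exponent of $d$ totals $p_n - 2p_{n-k} + p_{n-2k} + (p_{n-k} - 1 - p_{n-2k}) = p_n - p_{n-k} - 1$, as required.

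The hard part is pinning down the extension class of this short exact sequence.  The conjecture is equivalent to the statement that the extension is non-split in a very specific way: each cyclic summand $\Z/e_i\Z$ of $K(V')$ lifts to a cyclic summand $\Z/(de_i)\Z$ of $K(V)$, consuming one $\Z/d\Z$ factor from the kernel, while $p_n - 2p_{n-k} + p_{n-2k}$ copies of $\Z/d\Z$ survive as direct summands.  To establish this I would fix a $\Z$-basis of $R(G_n)$ adapted to a splitting $R(G_n) = \ker D^k \oplus C$ (available because $\ker D^k$ is a saturated submodule once $D^k$ is surjective), putting $\tilde{C}_V$ in the block-upper-triangular form $\bigl(\begin{smallmatrix}dI_m & X \\ 0 & \tilde{C}_{V'}\end{smallmatrix}\bigr)$, and then analyze how simultaneous row and column operations propagate the elementary divisors of the bottom-right block across the off-diagonal block $X$ into the top-left.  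Obtaining explicit combinatorial control of $X$ in terms of the up and down maps of $Y^r$---refining the unitriangular-minor argument of Theorem \ref{thm:ones of w}---is the missing ingredient.  A complementary attempt to invoke the Miller--Reiner framework on $U^kD^k + tI$ (in analogy with Shah's theorem for $UD + tI$) and then apply Proposition \ref{prop:snf} appears to fail: one checks in small cases ($r=2$, $n=4$, $k=2$) that the Smith form predicted by eigenvalue multiplicities alone disagrees with the $|K(V')|$ computed by Theorem \ref{thm:repeated}(a), so the Miller--Reiner conjecture fails outright for the closely-related matrix $D^kU^k + tI$, and genuinely new input from the wreath-product branching appears necessary.
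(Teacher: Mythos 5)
The statement you are addressing is stated in the paper only as a conjecture: the authors give no proof for general $r$, and in the remark following it they report having verified it only for $r=1$ by explicit row and column operations on the unitriangular submatrices from Proposition \ref{prop:ones of UkDk} and Theorem \ref{thm:ones of w}, operations they could not extend to $r>1$. Your proposal, to its credit, is honest that it also does not close the argument. The reduction you carry out is sound and is genuine partial progress: the intertwining $D^k\tilde{C}_V=\tilde{C}_{V'}D^k$, the exactness of $0\to\ker D^k\to R(G_n)\xrightarrow{D^k}R(G_{n-k})\to 0$ via Shah's surjectivity, and the snake-lemma conclusion $0\to(\Z/d\Z)^{p_n-p_{n-k}-1}\to K(V)\to K(V')\to 0$ all check out (the quotient $(\Z/d\Z)^{m}/(\Z/d\Z)\cong(\Z/d\Z)^{m-1}$ is justified because an order-$d$ element of $(\Z/d\Z)^m$ generates a direct summand), and your order bookkeeping against Theorem \ref{thm:ones of w} is consistent. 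Your observation that a Miller--Reiner-type statement for $D^kU^k+tI$ fails is also correct: using $D^kU^k=(UD+r)(UD+2r)\cdots(UD+kr)$ one computes for $r=2$, $n=4$, $k=2$ that the eigenvalue-predicted Smith form $(1,1,40,40,0)$ gives order $1600$ while Theorem \ref{thm:repeated}(a) gives $|K(V')|=192000$, so Proposition \ref{prop:snf} cannot be invoked.

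The genuine gap is exactly the one you name: the conjecture is strictly stronger than the short exact sequence. Knowing $K(V)$ as an extension of $K(V')$ by $(\Z/d\Z)^{p_n-p_{n-k}-1}$ determines neither the elementary divisors nor even that $d$ divides every nontrivial one; the conjectured answer asserts a very particular extension (each $\Z/e_i\Z$ lifts to $\Z/de_i\Z$, the rest of the kernel splits off), and nothing in your argument controls the off-diagonal block $X$ in the form $\bigl(\begin{smallmatrix}dI & X\\ 0 & \tilde{C}_{V'}\end{smallmatrix}\bigr)$. Two smaller cautions: your consistency check invokes Theorem \ref{thm:ones of w} for the word $D^kU^k$ at rank $n-k$, which as stated requires $k\le n-k$ (and $r\ge 2$), so the count $p_{n-k}-1-p_{n-2k}$ needs a separate justification when $k>n/2$; and faithfulness of $V'$ (needed to speak of $K(V')$ at all) should be noted explicitly, though it does follow from Proposition \ref{prop:dim of V(f)} after discarding trivial summands. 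In short: your approach is a reasonable, correct framing of the problem, and it reproves the order and the subgroup structure already implied by Theorem \ref{thm:structure of K(V(f))}, but the conjecture itself remains unproved by it, just as it does in the paper.
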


\begin{remark}
The claim that the multiplicity of 1 as an elementary divisor is $p_{n-k}$ is the content of Proposition \ref{prop:ones of UkDk}.  In the case $r=1$, we were able to prove Conjecture \ref{conj:final-conj}  using explicit row and column operations related to the unitriangular submatrices identified in the proof of Proposition \ref{prop:ones of UkDk} and Theorem \ref{thm:ones of w} which we were unable to generalize to the $r>1$ case.  

In the $r=1$ case, letting $k=n-2$ in Conjecture \ref{conj:final-conj} allows us to explicitly compute that
\begin{align*}
K(\Ind_{\mf{S}_2}^{\mf{S}_n} \mathbbm{1})&=K(U^{n-2}D^{n-2}) \\
&=\left(\Z/\frac{n!}{2}\Z \right)^{p(n)-4} \times \left(\Z/\frac{1}{8}n!(n-2)!(n-2)(n+1) \Z \right),
\end{align*}
where the largest factor is determined by the formula for the the size of the critical group given in Theorem \ref{thm:repeated}.  In the $k=n-3$ case one can again obtain an explicit formula, but this formula depends on $n$ modulo 36, suggesting that a simple general formula is unlikely to exist.
\end{remark}

\section*{Acknowledgements}
The authors wish to thank Vic Reiner for suggesting the analogy between restriction of representations and graph covering maps which led to this project, and for helpful conversations throughout.  The first author wishes to thank the MIT PRIMES program and its organizers Pavel Etingof, Slava Gerovitch, and Tanya Khovanova for their feedback and support throughout the program.

This material is based upon work supported by the National Science Foundation under Grant no. DMS-1519580.

\bibliographystyle{plain}
\bibliography{paper}
\end{document}